\newtheorem{theorem}{\sc Theorem}[section]
\newtheorem{lemma}[theorem]{\sc Lemma}
\newtheorem{corollary}[theorem]{\sc Corollary}
\newtheorem{remark}[theorem]{\sc Remark}
\newtheorem{conjecture}[theorem]{\sc Conjecture}
\begin{document}

\title[Profinite groups in which  centralizers are procyclic]
{Profinite groups in which  centralizers are virtually procyclic}

\author{Pavel Shumyatsky}
\address{Department of Mathematics, University of Brasilia, Brazil}
\email{pavel@unb.br, pz@mat.unb.br}
\author{Pavel Zalesskii}

\keywords{Profinite groups, procyclic groups, centralizers}
\subjclass{20E18}
\thanks{This research was supported by CNPq and FAPDF}

\maketitle

\begin{abstract}
The article deals with profinite groups in which centralizers are virtually procyclic. Suppose that $G$ is a profinite group such that the centralizer of every nontrivial element is virtually torsion-free while the centralizer of every element of infinite order is virtually procyclic. We show that $G$ is either virtually pro-$p$ for some prime $p$ or virtually torsion-free procyclic (Theorem \ref{main1}). The same conclusion holds for profinite groups in which the centralizer of every nontrivial element is virtually procyclic (Theorem \ref{main2}); moreover, if $G$ is not  pro-$p$, then $G$ has finite rank.
\end{abstract}

\section{Introduction}

Centralizers play a fundamental role in group theory. In particular, centralizers were of crucial importance in the development of the theories of finite and locally finite groups (cf. \cite{go}, \cite{kewe}, and \cite{hartley}). On the other hand, results on centralizers in profinite groups are still fairly scarce.

In this paper our theme is as follows. Given an  information on centralizers of nontrivial elements of a profinite group $G$, what can be deduced about the structure of $G$? 

This study started in \cite{ppz} where the authors handled profinite groups with abelian centralizers. Later the work was extended by the first author to profinite groups in which the   centralizers are pronilpotent \cite{acn}.

The next condition on centralizers comes from hyperbolic groups that are known to have very restricted centralizes. Namely,  a residually finite hyperbolic group is virtually torsion free (see  \cite[Theorem 5.1]{KW-00}) and the centralizer of an element of infinite order of it is virtually cyclic (see  \cite[Proposition 3.5]{Mih}). Imposing this property on the centralizers of elements of profinite groups we show that a `hyperbolic-like' profinite group must be either virtually pro-$p$ or virtually procyclic.  Note that in general a procyclic group is not necessarily virtually torsion free (cf. Remark \ref{888}). 

\begin{theorem}\label{main1} Let $G$ be a profinite group such that the centralizer of every nontrivial element is virtually torsion-free while the centralizer of every element of infinite order is virtually procyclic. Then  $G$ is either virtually pro-$p$ for some prime $p$ or virtually torsion-free procyclic.
\end{theorem}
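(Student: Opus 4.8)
The plan is to split according to the Sylow structure of $G$, using throughout two elementary but decisive observations. First, every Sylow pro-$p$ subgroup $P$ of $G$ is virtually torsion-free: a nontrivial pro-$p$ group has nontrivial centre, so choosing $1\neq z\in Z(P)$ we get $P\leq C_G(z)$, and $C_G(z)$ is virtually torsion-free because $z\neq 1$. Second, the lever that produces the procyclic conclusion: if some element $g$ of infinite order has centralizer of finite index in $G$, then $C_G(g)$ is an \emph{open} virtually procyclic subgroup, hence $G$ itself is virtually procyclic; in particular this happens as soon as $G$ has an open subgroup whose centre contains an element of infinite order. I would also record the reverse upgrade: once $G$ is known to be virtually procyclic, with $C$ an open procyclic (hence abelian) subgroup, any $1\neq x\in C$ satisfies $C\leq C_G(x)$, so $C$ is virtually torsion-free; thus under our hypotheses \emph{virtually procyclic} is automatically promoted to \emph{virtually torsion-free procyclic}. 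It therefore suffices to prove the dichotomy: either some Sylow subgroup fails to be virtually procyclic and then $G$ is virtually pro-$p$, or all Sylow subgroups are virtually procyclic and then $G$ is virtually procyclic.

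Assume first that every Sylow pro-$\ell$ subgroup is virtually procyclic, so $G$ has finite rank. I would then invoke the structure of profinite groups all of whose Sylow subgroups are procyclic (the profinite analogue of the fact that a finite group with cyclic Sylow subgroups is metacyclic) to produce an open normal procyclic subgroup $A$ with $G/A$ procyclic. The conjugation action gives $G\to\mathrm{Aut}(A)$; since a pro-$q$ group acting on a copy of $\mathbf{Z}_p$ with $q\neq p$ acts through a finite quotient (as $\mathrm{Aut}(\mathbf{Z}_p)$ has finite pro-$q$ part), an open subgroup $U\leq G$ centralizes a chosen infinite-order element $a\in A$. Then $C_G(a)\supseteq U$ is open, and the lever gives that $G$ is virtually procyclic, hence virtually torsion-free procyclic by the upgrade above.

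Now suppose some Sylow pro-$p$ subgroup $P$ is not virtually procyclic; I claim $G$ is virtually pro-$p$. First $P$ is non-abelian: otherwise, being infinite and virtually torsion-free, $P$ would contain an infinite-order element $x$ with $P\leq C_G(x)$, forcing $P$ virtually procyclic. Next I would show that no prime $q\neq p$ contributes an infinite Sylow subgroup and that the $p'$-part is finite, so that $G$ is pro-$p$-by-finite. The engine is coprime action: after reducing (via conjugacy of Sylow subgroups together with a Frattini and profinite Schur--Zassenhaus argument) to an open normal subgroup that is a semidirect product of a normal Sylow pro-$p$ subgroup $P_1$ by a group $H$ of order prime to $p$, the image of $H$ in $\mathrm{Aut}(P_1)$ is a $p'$-group; since the kernel of $\mathrm{Aut}(P_1)\to\mathrm{Aut}(P_1/\Phi(P_1))$ is pro-$p$, such a $p'$-subgroup embeds into $\mathrm{Aut}(P_1/\Phi(P_1))$, which is finite when $P_1$ has finite rank, forcing $H$ finite. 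Any portion of $H$ acting trivially is ruled out directly: a nontrivial infinite-order element centralizing $P_1$ would have centralizer containing the non-abelian, non-procyclic $P_1$, contradicting that its centralizer is virtually procyclic. Hence $G$ is virtually pro-$p$.

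The main obstacle is the final step in the non-procyclic case when $P$ has \emph{infinite} rank, where the finiteness of $\mathrm{Aut}(P/\Phi(P))$ is unavailable. Here one cannot bound the rank at all (an infinite-rank free pro-$p$ group is itself an admissible example), so instead of bounding $H$ through its automorphism action I would argue directly that the $p'$-content is finite: using that every infinite-order element of $P$ has virtually procyclic centralizer, I would show that any $q$-element with $q\neq p$ that normalizes a piece of $P$ and centralizes a nontrivial element must have its centralizer absorb too much of the non-abelian $P$, so the $p'$-part is finite and $P$ is normal of finite index. Making this absorption argument uniform, independent of the rank of $P$ and of whether $G$ is prosoluble, is the crux, and it is the step I expect to require the most care.
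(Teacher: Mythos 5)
The central gap is that you explicitly leave unproved the crux of the hard case, namely the ``absorption argument'' showing that the $p'$-content of $G$ is finite when some Sylow pro-$p$ subgroup is not virtually procyclic; that is exactly where all of the work in this theorem lives, so the proposal does not constitute a proof. Moreover, the reduction you place before it --- passing to an open normal subgroup that is a semidirect product of a normal Sylow pro-$p$ subgroup by a pro-$p'$ complement --- is itself unjustified: a profinite group has no reason to contain an open subgroup with a normal Sylow $p$-subgroup, and before any Frattini/Schur--Zassenhaus factorization one must first dispose of nonabelian composition factors. The paper does this using the classification (every nonabelian finite simple group has order divisible by $3$ or $5$, Lemma \ref{prost}), together with Zelmanov's theorem and Hall--Kulatilaka to rule out infinite abstract locally finite subgroups (Remark \ref{777}), obtaining that $G$ is virtually prosoluble (Lemma \ref{prosolu}); only then are Hall $p'$-subgroups and Sylow bases available. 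The decisive tool you are missing is Lemma \ref{100}: if $A\cong\mathbb{Z}_p$ acts on a normal pro-$p'$ subgroup $H$ under the hypotheses of the theorem, then $HA$ is virtually procyclic --- proved by showing each $C_H(A^{i!})$ is metabelian up to a uniformly finite piece, forcing $H$ itself to be metabelian and then procyclic-by-finite. The paper then manufactures such an $A$ inside a system normalizer of a term of the lower central series and concludes by induction on the Fitting height (Lemmas \ref{1000} and \ref{101}). Nothing in your sketch substitutes for this machinery.

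There are also secondary errors. Your first ``elementary observation'' is false as stated: an infinite pro-$p$ group need not have nontrivial centre (a free pro-$p$ group of rank at least $2$ has trivial centre), so you cannot conclude $P\leq C_G(z)$ for $z\in Z(P)$; the correct route to ``an infinite Sylow subgroup contains an element of infinite order'' is via Remark \ref{777}. In the case where all Sylow subgroups are virtually procyclic, the deduction ``so $G$ has finite rank'' does not follow, because the indices of the open procyclic subgroups need not be bounded over the infinitely many primes in $\pi(G)$; and the metaprocyclic structure theorem you invoke requires genuinely procyclic, not merely virtually procyclic, Sylow subgroups, with no evident open subgroup of $G$ achieving this simultaneously for all primes. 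The ``lever'' (an infinite-order element with open centralizer forces $G$ virtually procyclic) and the final upgrade from virtually procyclic to virtually torsion-free procyclic are both correct, but they are the easy part of the argument.
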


Having dealt with Theorem \ref{main1}, it was natural to look at profinite groups  in which the centralizer of every nontrivial element is virtually procyclic.

\begin{theorem}\label{main2} Let $G$ be a profinite group in which the centralizer of every nontrivial element is virtually procyclic. Then  $G$ is either virtually pro-$p$ for some prime $p$ or virtually procyclic.
\end{theorem}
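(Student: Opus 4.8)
The plan is to take Theorem \ref{main1} as given and to reduce to it, the point being that the only difference between the two hypotheses is torsion. Indeed, since the centralizer of every nontrivial element of $G$ is assumed virtually procyclic, in particular the centralizer of every element of infinite order is virtually procyclic, so the part of the hypothesis of Theorem \ref{main1} concerning elements of infinite order holds automatically; what is missing is the virtual torsion-freeness of all centralizers. I would therefore dichotomize according to whether that missing hypothesis happens to hold. Recall that a procyclic group $P$ decomposes as $\prod_p P_p$ with each $P_p$ a procyclic pro-$p$ group, and that, since closed subgroups of $P$ respect the primary decomposition, $P$ fails to be virtually torsion-free precisely when $P_p$ is finite and nontrivial for infinitely many primes $p$. \textbf{Case 1:} the centralizer of every nontrivial element of $G$ is virtually torsion-free. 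Then $G$ satisfies the full hypothesis of Theorem \ref{main1}, which at once yields that $G$ is virtually pro-$p$ or virtually (torsion-free) procyclic, exactly the desired conclusion.

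\textbf{Case 2:} there is a nontrivial $x\in G$ whose centralizer $C=C_G(x)$ is not virtually torsion-free, and here I must show directly that $G$ is virtually procyclic. Fix an open procyclic subgroup $C_0\le C$; by the remark above its torsion is supported on an infinite set $S$ of primes, so $C_0\supseteq \prod_{p\in S}A_p$ with each $A_p$ a nontrivial finite cyclic $p$-group. Since $C_0$ is abelian, $C_0\le C_G(t)$ for every $t\in C_0$, and each such centralizer is again virtually procyclic; this forces, for every prime $p$ and every $q\in S$ with $q\ne p$, the $p$-elements of $G$ commuting with a fixed generator of $A_q$ to lie in a procyclic group. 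Intersecting this information over the infinitely many primes $q\in S$, I expect to pin down the Sylow pro-$p$ subgroups of $G$, for almost all $p$, as finite cyclic groups, and then, by tracking the conjugation action of $G$ on the abelian group $\prod_{p\in S}A_p$ and using that this action must again be compatible with the virtually procyclic centralizers, to produce a procyclic subgroup of finite index that is normal in $G$. Producing merely an \emph{abelian} normal subgroup of finite index would already suffice: if $A$ is open, abelian and normal in $G$, then $A\le C_G(a)$ for any nontrivial $a\in A$, so $A$, and hence $G$, is virtually procyclic; the same bookkeeping also shows that $G$ has finite rank whenever it is not pro-$p$.

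The hard part will be Case 2, and specifically the passage from the statement that \emph{each} centralizer is virtually procyclic to the existence of a \emph{single} procyclic (equivalently, abelian) normal subgroup of finite index in $G$. The difficulty is that the torsion lives on infinitely many primes simultaneously, so one cannot argue one prime at a time with uniform control; the crux will be to show that the conjugation action of $G$ on $\prod_{p\in S}A_p$ is essentially trivial, i.e. inner-by-finite, thereby ruling out any genuinely non-abelian or higher-rank behaviour. This is precisely where the strengthened hypothesis of Theorem \ref{main2} — virtual procyclicity of the centralizers of \emph{torsion} elements, not just of elements of infinite order — must be used, since it is exactly the information that Theorem \ref{main1} does not provide.
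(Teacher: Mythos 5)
Your Case 1 is fine: if every centralizer happens to be virtually torsion-free, the hypotheses of Theorem \ref{main1} hold verbatim and the conclusion follows. The problem is that Case 2 is not a proof but a declaration of intent, and the intended route has a concrete gap. From the existence of one element $x$ with $C_G(x)\supseteq\prod_{q\in S}A_q$ you only learn about elements of $G$ that \emph{already} commute with a generator of $A_q$; the statement ``the $p$-elements of $G$ commuting with a fixed generator of $A_q$ lie in a virtually procyclic group'' gives no control whatsoever over an arbitrary Sylow pro-$p$ subgroup of $G$, because nothing forces a Sylow $p$-subgroup to meet $C_G(A_q)$ nontrivially. So the step ``intersecting this information over $q\in S$, I expect to pin down the Sylow pro-$p$ subgroups of $G$'' does not follow from anything you have established, and it is precisely the global-from-local passage that constitutes the entire difficulty of the theorem. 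Likewise, ``tracking the conjugation action of $G$ on $\prod_{p\in S}A_p$'' presupposes that this subgroup (or something commensurable with it) is normal, which is again not given.

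The paper's proof shows why this cannot be shortcut: even to get $G$ virtually prosoluble one needs the CFSG-based Lemma \ref{prost} together with Feit--Thompson and the key technical Lemmas \ref{100}/\ref{1002} (Lemma \ref{prosolu}); after that, Theorem \ref{main2} is obtained by bounding the Fitting height via Sylow basis normalizers, Zelmanov's theorem, and Thompson's theorem on coprime automorphisms of soluble groups (Lemma \ref{glll} and the argument following it). None of this machinery is avoided by your dichotomy, since your Case 2 hypothesis (torsion in one centralizer supported on infinitely many primes) does not by itself exclude large nonabelian or non-prosoluble behaviour elsewhere in $G$. The one correct observation in Case 2 --- that an open abelian normal subgroup would immediately yield virtual procyclicity --- is a useful endgame, but you give no viable path to producing such a subgroup. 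As it stands, the proposal proves only the easy half of the theorem.
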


Recall that a profinite group $K$ is said to have finite (Pr\"ufer) rank $r$ if every subgroup of $K$ can be generated by $r$ elements.
 The next corollary can be easily deduced from Theorem \ref{main2}.

\begin{corollary}\label{corr} Let $G$ be a profinite group in which the centralizer of every nontrivial element is virtually procyclic and suppose that $G$ is not a pro-$p$ group. Then  $G$ has finite rank.
\end{corollary}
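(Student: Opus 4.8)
The plan is to run the dichotomy of Theorem~\ref{main2} through a rank estimate, the only substantial point being the virtually pro-$p$ alternative. First I would dispose of the easy alternative: if $G$ is virtually procyclic, pick an open procyclic subgroup and replace it by its core to get an open normal procyclic $C$ (a closed subgroup of a procyclic group is procyclic); then $C$ has rank $1$ and $G/C$ is finite, so $G$ has finite rank, because a profinite group possessing an open normal subgroup of finite rank is again of finite rank. By Theorem~\ref{main2} we may therefore assume that $G$ is virtually pro-$p$. Let $P$ be the maximal normal pro-$p$ subgroup of $G$; it is open, $G/P$ is finite, and since $G$ is not pro-$p$ the order of $G/P$ is divisible by some prime $q\neq p$. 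As $G$ is a finite extension of $P$, it suffices to prove that $P$ has finite rank.

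Next I would bring in the coprime action. Choose $x\in G$ of order $q$; conjugation by $x$ is an automorphism of $P$ of order coprime to $p$, and $C_P(x)=C_G(x)\cap P$ is a closed pro-$p$ subgroup of the virtually procyclic group $C_G(x)$, hence of finite rank. Observe that every closed subgroup of $G$ inherits the hypothesis (centralizers only shrink in subgroups, and a closed subgroup of a virtually procyclic group is virtually procyclic); in particular every nontrivial element of $P$ has finite-rank centralizer inside $P$. By the theory of coprime action $P=C_P(x)[P,x]$ and $[P,x]=[[P,x],x]$. Since $C_P(x)$ has finite rank, its image $C_{P/\Phi(P)}(x)$ in the Frattini quotient $V=P/\Phi(P)$ is finite-dimensional; as $V=C_V(x)\oplus[V,x]$ this image is exactly the Frattini quotient of $P/[P,x]$, so $P/[P,x]$ has finite rank and $P$ has finite rank if and only if $[P,x]$ does. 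Replacing $P$ by $[P,x]$, I may thus assume $[P,x]=P$.

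Now I would split according to the center. If $Z(P)\neq1$, pick $1\neq z\in Z(P)$: then $C_P(z)=P$ has finite rank and we are done. This disposes at once of the abelian case and of the case in which $x$ acts fixed-point-freely, since a coprime fixed-point-free automorphism of prime order makes $P$ nilpotent (Thompson's theorem, via inverse limits of the finite case), and a nontrivial nilpotent pro-$p$ group has nontrivial center. The remaining, and genuinely hard, case is $Z(P)=1$, where necessarily $C_P(x)\neq1$; this is exactly the behaviour of a nonabelian free pro-$p$ group equipped with a finite-order automorphism. Here the whole content of the corollary is concentrated in one assertion: a coprime automorphism of finite order of an \emph{infinite}-rank pro-$p$ group, all of whose element centralizers have finite rank, must itself have a fixed-point subgroup of infinite rank. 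Granting this, infinite rank of $P$ would make $C_P(x)$, and hence $C_G(x)$, of infinite rank, contradicting the hypothesis, so $P$ has finite rank.

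The main obstacle is precisely this last assertion. I expect to attack it through the structure theory of finite-order automorphisms of pro-$p$ groups --- actions on pro-$p$ trees and the resulting description of fixed-point subgroups --- which should force, for the free-like groups arising when $Z(P)=1$ and centralizers are procyclic, the fixed subgroup of $x$ to grow with the rank of $P$. One already sees the mechanism in the inverting automorphism $a\mapsto a^{-1}$ of a free pro-$p$ group: it acts without nonzero fixed vectors on the Frattini quotient, yet fixes every commutator $[a_i,a_j]$ modulo the third lower central term, so its fixed subgroup contains an infinite-rank subgroup of commutators. Controlling this growth in general, rather than the elementary reductions above, is where the real work lies.
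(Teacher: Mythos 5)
Your reductions (disposing of the virtually procyclic alternative, passing to the open normal pro-$p$ subgroup $P$, picking $x$ of order $q\neq p$, and observing that $C_P(x)$ has finite rank) match the paper's setup, and the cases $Z(P)\neq1$ and $x$ fixed-point-free are correctly dispatched. But the proof has a genuine gap exactly where you say the ``real work lies'': the assertion that a coprime prime-order automorphism of an infinite-rank pro-$p$ group with finite-rank element-centralizers must have infinite-rank fixed points is left entirely unproved, and the route you sketch (actions on pro-$p$ trees, free pro-$p$ group heuristics) is speculative and not obviously workable -- nothing in your setup guarantees that $P$ is free-like when $Z(P)=1$. Since this assertion is the whole content of the hard case, the proposal does not constitute a proof.

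The paper closes precisely this gap with a ready-made theorem of Khukhro (Theorem~\ref{khu}, from \cite{khukhro}): if a finite (hence, by inverse limits, pronilpotent) group $K$ has an automorphism $\alpha$ of prime order $q$ with $C_K(\alpha)$ of rank $r$, then $K$ has a characteristic nilpotent subgroup $N$ of $q$-bounded class with $K/N$ of $(q,r)$-bounded rank. Applied to $P$ with $\alpha$ the automorphism induced by $a$ (of order dividing $q$; if trivial then $P\leq C_G(a)$ and you are done at once), this gives $P/N$ of finite rank, and the nilpotent piece $N$ is killed by the centralizer hypothesis: $N$ lies in $C_G(z)$ for any nontrivial $z\in Z(N)$, which is virtually procyclic, so $N$ has finite rank and hence so does $P$. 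Note that this argument needs none of your preliminary reductions (to $[P,x]=P$, or the case split on $Z(P)$); the decomposition nilpotent-by-(bounded rank) supplied by Khukhro's theorem handles all cases uniformly. A secondary quibble: in your fixed-point-free subcase, Thompson's theorem alone gives nilpotency of each finite quotient but not a bound on the class, so to conclude nilpotency of the pro-$p$ group by an inverse limit you actually need the Higman--Kreknin--Kostrikin bound on the class; as it happens this subcase is subsumed by Khukhro's theorem anyway.
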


In view of the above corollary the following conjecture seems plausible.

\begin{conjecture} Let $G$ be a profinite group in which the centralizer of every nontrivial element has finite rank. Suppose that $G$ is not a pro-$p$ group. Then  $G$ has finite rank.
\end{conjecture}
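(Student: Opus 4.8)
The plan is to derive the corollary from the dichotomy supplied by Theorem \ref{main2}, treating its two alternatives separately. By that theorem $G$ is either virtually procyclic or virtually pro-$p$, and since by hypothesis $G$ is not pro-$p$, the second alternative means $G$ is virtually pro-$p$ but not itself pro-$p$. The virtually procyclic case is immediate: a procyclic group has rank $1$, since every closed subgroup is generated by a single element, and a profinite group possessing an open subgroup of finite rank has finite rank itself, with an explicit bound in terms of the rank of the open subgroup and its index. Hence it remains only to bound the rank of $G$ in the case where $G$ is virtually pro-$p$ but not pro-$p$.

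In that case I would first fix an open normal pro-$p$ subgroup $N$ of $G$; such a subgroup exists, for instance as the normal core of an open pro-$p$ subgroup, which is again pro-$p$ because it is a closed subgroup of a pro-$p$ group. Since $N$ is open in $G$, it suffices to show that $N$ has finite rank. Because $G$ is not pro-$p$, the finite quotient $G/N$ is not a $p$-group, so some prime $q\ne p$ divides $|G/N|$; by the coprimality of $q$ and the order of $N$, the preimage of a subgroup of order $q$ of $G/N$ splits over $N$, producing an element $x\in G$ of order $q$ that induces by conjugation a coprime automorphism of $N$. The hypothesis now furnishes two pieces of control on $N$: the fixed-point subgroup $C_N(x)=N\cap C_G(x)$ is contained in the virtually procyclic group $C_G(x)$ and therefore has finite rank; and every nontrivial element $y\in N$ has $C_N(y)\le C_G(y)$ virtually procyclic, so in particular every closed abelian subgroup of $N$ is virtually procyclic and hence of bounded rank.

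The heart of the matter is to convert this local data into a global rank bound for $N$. I would argue that a pro-$p$ group all of whose abelian subgroups are virtually procyclic and which admits a coprime automorphism $x$ of prime order with $C_N(x)$ of finite rank must itself have finite rank. The bounded-abelian-rank condition alone is not sufficient here, since a nonabelian free pro-$p$ group has all centralizers procyclic yet has infinite rank, so the coprime automorphism must genuinely be exploited: one passes to the action of $x$ on the successive Frattini-type sections of $N$, uses the coprimality to identify the fixed points on each section with the image of $C_N(x)$, and combines this with the absence of high-rank abelian subgroups to rule out the free-like growth of the quotients. I expect this last step to be the main obstacle, since it is exactly the point where the centralizer hypotheses on \emph{all} elements of $N$, and not merely on $x$, must be brought to bear; the free pro-$p$ example shows that no purely coprime-automorphism rank theorem can suffice on its own. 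Once $N$ is shown to have finite rank, the openness of $N$ in $G$ yields the finite rank of $G$ and completes the proof.
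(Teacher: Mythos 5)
The statement you are proving is the paper's \emph{Conjecture}, not Corollary \ref{corr}, and the paper offers no proof of it: the authors explicitly leave it open, remarking only that it holds under the extra assumption that centralizers are pronilpotent (by the results of \cite{acn}). So there is no ``paper proof'' to match your argument against; what matters is whether your argument stands on its own, and it does not.

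The fatal gap is at the very first step. You invoke the dichotomy of Theorem \ref{main2}, but that theorem requires the centralizer of every nontrivial element to be \emph{virtually procyclic}, whereas the conjecture only assumes these centralizers have \emph{finite rank}. Finite rank is strictly weaker: $\Bbb Z_p\times\Bbb Z_p$ has rank $2$ and is not virtually procyclic, and nothing in the conjecture's hypothesis lets you upgrade ``finite rank'' to ``virtually procyclic.'' You appear to have conflated the Conjecture with Corollary \ref{corr} (whose hypothesis is the virtually procyclic condition, and which the paper does deduce from Theorem \ref{main2} together with Khukhro's rank theorem). Consequently the dichotomy ``virtually procyclic or virtually pro-$p$'' is simply not available, and the entire reduction to the virtually pro-$p$ case collapses. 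Your subsequent claims also silently use the stronger hypothesis, e.g.\ that $C_G(x)$ is virtually procyclic and that every closed abelian subgroup of $N$ is virtually procyclic; under the conjecture's actual hypothesis you only know these subgroups have finite rank. Finally, even granting all of that, you yourself flag that the central step --- converting the coprime automorphism with finite-rank fixed points plus the abelian-subgroup condition into a global rank bound for the pro-$p$ part --- is not carried out but only hoped for; as written it is a description of an obstacle, not an argument. The statement remains a conjecture.
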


Note that the results obtained in \cite{acn} show that the conjecture is correct if we additionally assume that the centralizers in $G$ are pronilpotent.

The proofs of both Theorem \ref{main1} and Theorem \ref{main2} depend on the classification of finite simple groups.

\section{Preliminaries}

We start this section by quoting  the following famous theorem of Zelmanov \cite{ze}.

\begin{theorem}\label{torsion} Each compact torsion group is locally finite.
\end{theorem}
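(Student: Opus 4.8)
The plan is to reduce the statement to the pro-$p$ case and then invoke the positive solution of the restricted Burnside problem. Write $G$ for the given compact (profinite) torsion group. The first move is to exploit the structure theory of profinite torsion groups: by a reduction due to Wilson, such a $G$ admits a finite characteristic series whose successive factors are either pro-$p$ groups (for various primes $p$) or Cartesian products of finite simple groups. Since, by the classical theorem that an extension of a locally finite group by a locally finite group is again locally finite, it suffices to establish local finiteness for each type of factor separately.

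The factors that are Cartesian products of finite simple groups are handled using the classification of finite simple groups. The point is that a torsion Cartesian product $\prod_i S_i$ of nonabelian finite simple groups cannot contain elements of unbounded order, and the classification supplies uniform numerical bounds (on element orders, on generation, and on the structure of the $S_i$) that force such a product to be locally finite. This disposes of the non-pro-$p$ factors and reduces the whole problem to showing that a torsion pro-$p$ group is locally finite.

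For a pro-$p$ group $P$ that is torsion I would first establish that $P$ has finite exponent. Since $P$ is compact and every element has $p$-power order, $P = \bigcup_{n\ge 1} P_n$ where $P_n=\{g\in P: g^{p^n}=1\}$, and each $P_n$ is closed because the power map is continuous. By the Baire category theorem some $P_n$ has nonempty interior and hence contains a coset of an open subgroup; a short argument then upgrades this local bound to a global one, so that $P^{p^N}=1$ for some $N$. It then remains to show that a pro-$p$ group of finite exponent is locally finite: every topologically finitely generated closed subgroup is an inverse limit of finite $p$-groups of bounded exponent generated by a bounded number of elements, and a uniform bound on the orders of all such finite groups forces the inverse limit to be finite.

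The main obstacle is precisely this last step: the required uniform bound is the content of Zelmanov's solution of the restricted Burnside problem, whose proof is highly nontrivial and proceeds through Lie-algebra techniques (Engel identities and the theory of sandwich algebras) rather than through direct group-theoretic manipulation. The classification-based reduction of the non-pro-$p$ factors is also substantial. Thus the proposed argument does not render the theorem elementary; rather, it organizes it as a reduction to the pro-$p$ case (via the structure of profinite torsion groups together with the classification of finite simple groups) followed by an appeal to the restricted Burnside problem.
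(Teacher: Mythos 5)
This statement is not proved in the paper at all: it is Zelmanov's theorem, quoted verbatim from \cite{ze} as a known black box, so there is no internal proof to compare against. What you have written is, in outline, the architecture of the actual proof in the literature: Wilson's reduction of a compact torsion group to a finite characteristic series with pro-$p$ factors and Cartesian products of finite simple groups, the classification to bound the simple factors (a torsion Cartesian product of nonabelian finite simple groups forces the factors to have bounded order, after which local finiteness is elementary), and Zelmanov's theorem that torsion pro-$p$ groups are locally finite. At that level your reduction scheme is the standard and correct one.

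There is, however, a genuine gap in the pro-$p$ step as you describe it. The Baire category argument applied to the closed sets $P_n=\{g\in P: g^{p^n}=1\}$ yields only a coset $xH$ of an open subgroup $H$ all of whose elements have order dividing $p^n$; there is no ``short argument'' upgrading this to $P^{p^N}=1$. A product of two elements of order dividing $p^n$ has no a priori bounded order, so you cannot pass from the coset bound to a bound on $H$ itself, and the assertion that every compact torsion group has finite exponent is in fact obtained as a \emph{corollary} of local finiteness (together with further nontrivial input), not as a stepping stone toward it. Zelmanov's actual argument feeds the coset condition directly into the associated Lie algebra: the coset identity forces the Lie algebra of the Zassenhaus filtration to satisfy a polynomial identity and to be generated by ad-nilpotent elements, and his Lie-theoretic results (the sandwich machinery you allude to) then give nilpotency, from which local finiteness follows. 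So your plan ``finite exponent first, then the restricted Burnside problem'' does not go through as stated; the correct route goes through the Lie algebra without first establishing finite exponent. Since the paper only cites the theorem, none of this affects the paper, but as a standalone proof sketch the pro-$p$ step needs to be repaired along the lines just indicated.
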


Recall that according to the Hall-Kulatilaka theorem \cite{haku} each infinite locally finite group has an infinite abelian subgroup. Combining this with Theorem \ref{torsion} we deduce

\begin{theorem}\label{infab} Each infinite locally finite, or profinite, group has an infinite abelian subgroup.
\end{theorem}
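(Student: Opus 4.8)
The plan is to treat the two classes of groups separately and, within the profinite case, to split according to whether the group is torsion. For an infinite locally finite group the assertion is nothing but the Hall--Kulatilaka theorem quoted above, so no further argument is needed there. It therefore remains to produce an infinite abelian subgroup inside an arbitrary infinite profinite group $G$.

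First I would dispose of the case in which $G$ contains an element $x$ of infinite order. Since $\langle x\rangle$ is an infinite cyclic subgroup, its closure $\overline{\langle x\rangle}$ is an infinite subgroup of $G$; being the closure of an abelian subgroup in a topological group, it is abelian (in fact procyclic). Thus $\overline{\langle x\rangle}$ is the desired infinite abelian subgroup.

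The remaining case is when every element of $G$ has finite order, that is, when $G$ is a torsion group. Here I would invoke Theorem \ref{torsion}: a profinite group is compact, so a torsion profinite group is compact torsion and hence locally finite by Zelmanov's theorem. Since $G$ is infinite and locally finite, the Hall--Kulatilaka theorem again supplies an infinite abelian subgroup, which completes the argument.

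There is no serious obstacle in this proof: its whole content lies in combining the two cited results through the torsion/non-torsion dichotomy. The only points meriting a word of care are that $\overline{\langle x\rangle}$ really is infinite (immediate from $\langle x\rangle$ being infinite) and that the passage to local finiteness via Theorem \ref{torsion} preserves the infiniteness of the group.
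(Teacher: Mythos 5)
Your proof is correct and is exactly the argument the paper intends: the paper simply says the theorem follows by ``combining'' the Hall--Kulatilaka theorem with Zelmanov's theorem, and the torsion/non-torsion case split you make explicit (with the closure of an infinite cyclic subgroup handling the non-torsion case) is the implicit content of that combination.
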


If $A$ is a group of automorphisms of a group $G$, the subgroup generated by all elements of the form $g^{-1}g^\alpha$ with $g\in G$ and $\alpha\in A$ is denoted by $[G,A]$. It is well-known that the subgroup $[G,A]$ is an $A$-invariant normal subgroup in $G$. We write $C_G(A)$ for the centralizer  of $A$ in $G$.

The next lemma is a list of useful facts on coprime actions. Here $|K|$ means the order of a profinite group $K$ (see for example \cite{rz} for the definition of the order of a profinite group). For finite groups the lemma is well known (see for example \cite[Ch.~5 and 6]{go}). For infinite profinite groups the lemma follows from the case of finite groups and the inverse limit argument (see \cite[Proposition 2.3.16]{rz} for a detailed proof of item (iii)). As usual, $\pi(G)$ denotes the set of prime divisors of the order of $G$. By an automorphism of a profinite group we always mean a continuous automorphism.

\begin{lemma}\label{cc}
Let  $A$ be a profinite group of automorphisms of a profinite group $G$ such that $(|G|,|A|)=1$. Then
\begin{enumerate}
\item[(i)] $G=[G,A]C_{G}(A)$.
\item[(ii)] $[G,A,A]=[G,A]$. 
\item[(iii)] $C_{G/N}(A)=NC_G(A)/N$ for any $A$-invariant normal subgroup $N$ of $G$.
\item[(iv)] $G$ contains an $A$-invariant Sylow $q$-subgroup for each prime $q\in\pi(G)$.
\end{enumerate}
\end{lemma}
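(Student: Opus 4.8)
The plan is to reduce every item to the corresponding statement for finite groups, which I take as known (see \cite[Ch.~5 and 6]{go}), and then transfer it to the profinite setting by an inverse limit argument. The first step is to fix a suitable inverse system. Since $A$ is compact and acts continuously, the stabilizer in $A$ of any open normal subgroup $U$ of $G$ is open, so the $A$-orbit of $U$ is finite and $\bigcap_{\alpha\in A}U^{\alpha}$ is an $A$-invariant open normal subgroup of $G$ contained in $U$. Hence the $A$-invariant open normal subgroups of $G$ form a base of neighbourhoods of the identity, and $G=\varprojlim_{M}G/M$ as $M$ runs over this directed family (directed because the intersection of two such subgroups is again one). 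For each such $M$ the action of $A$ on the finite group $G/M$ factors through a finite quotient of $A$, and the coprimality hypothesis $(|G|,|A|)=1$ passes to $G/M$; thus each $G/M$ carries a finite coprime action to which the finite-group version of the lemma applies.

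For the ``equational'' parts (i), (ii) and the easy inclusion of (iii), I would argue by comparing images in all finite quotients. Note first that $[G,A]$ and $C_G(A)$ are closed, that $[G,A]C_G(A)$ is closed because $[G,A]$ is normal, that the image of $[G,A]$ in $G/M$ is $[G/M,A]$, and that the image of $C_G(A)$ is contained in $C_{G/M}(A)$. Granting (iii) for open $N=M$, the image of $C_G(A)$ in fact equals $C_{G/M}(A)$, so the finite-group identity $G/M=[G/M,A]\,C_{G/M}(A)$ shows that the closed subgroup $[G,A]C_G(A)$ surjects onto every $G/M$; since the $M$ form a base, it equals $G$, giving (i). Likewise the images of $[G,A,A]$ and $[G,A]$ coincide in every $G/M$ by the finite case, so these two closed subgroups are equal, giving (ii). The inclusion $NC_G(A)/N\subseteq C_{G/N}(A)$ in (iii) is immediate.

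The substantive content is the reverse inclusion in (iii) and the existence statement (iv), both of which require a genuine compactness argument rather than a formal one. For (iii), given a coset $gN$ fixed by $A$ (so that $A$ acts on $gN$), I would show that its set of $A$-fixed points is nonempty by exhibiting it as an inverse limit of the corresponding nonempty finite sets coming from the finite quotients, nonemptiness at each finite stage being exactly the finite-group statement; since an inverse limit of nonempty finite sets over a directed set is nonempty, this produces $c\in C_G(A)$ with $gN=cN$. For (iv) I would let $\mathcal{S}_M$ be the finite, and by the finite case nonempty, set of $A$-invariant Sylow $q$-subgroups of $G/M$; because a surjection carries an $A$-invariant Sylow $q$-subgroup onto one, the maps $G/M'\to G/M$ induce maps $\mathcal{S}_{M'}\to\mathcal{S}_M$, making $\{\mathcal{S}_M\}$ an inverse system of nonempty finite sets, and a point of $\varprojlim_M\mathcal{S}_M$ is a compatible family whose inverse limit is an $A$-invariant Sylow $q$-subgroup of $G$.

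The main obstacle is precisely the verification that these existence inverse systems are genuine and have nonempty limits: one must check that the transition maps are well defined (the image of an $A$-invariant Sylow subgroup is again an $A$-invariant Sylow subgroup, and the fixed-point sets map compatibly), that the index family is directed, and that the real input, nonemptiness at each finite level, is supplied by the classical coprime-action theorems. Once this bookkeeping is in place, the principle that an inverse limit of nonempty finite sets is nonempty finishes both (iii) and (iv), and (i), (ii) then follow formally as above. This is the route detailed in \cite[Proposition 2.3.16]{rz} for item (iii).
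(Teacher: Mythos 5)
Your proposal is correct and follows exactly the route the paper itself indicates: reduce each item to the well-known finite coprime-action statements and transfer them to the profinite setting by an inverse limit argument over a base of $A$-invariant open normal subgroups, with the compactness (nonempty inverse limit of nonempty finite sets) step carrying the existence claims in (iii) and (iv), as in \cite[Proposition 2.3.16]{rz}. No discrepancies to report.
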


Throughout the paper we denote by $\langle X\rangle$ the (sub)group generated by the set $X$.

\begin{lemma}\label{resh} Let $A$ be an infinite procyclic group acting coprimely on a profinite group $G$. Let $A_i=A^{i!}$ denote the subgroup generated by the $i!$th powers of elements of $A$. Set $G_i=C_G(A_i)$. For each $A$-invariant open normal subgroup $N$ of $G$ there is $i$ such that $G=NG_i$. If there is an index $j$ such that $G_j=G_{j+k}$ for each $k=1,2\dots$, then $G_j=G$.
\end{lemma}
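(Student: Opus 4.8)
The plan is to exploit that the subgroups $A_i = A^{i!}$ form a descending chain with trivial intersection, so that the centralizers $G_i = C_G(A_i)$ form an ascending chain spread out across all finite quotients of $G$. First I would record three elementary observations. Since $(i+1)! = (i+1)\,i!$, we have $A_{i+1} \le A_i$, and consequently $G_i = C_G(A_i) \le C_G(A_{i+1}) = G_{i+1}$; thus $(A_i)$ descends and $(G_i)$ ascends. Next, each $A_i$ acts coprimely on $G$, because $|A_i|$ divides $|A|$ and $(|G|,|A|) = 1$. Finally, $\bigcap_i A_i = 1$: any open subgroup $U \le A$ has finite cyclic quotient $A/U$ of some order $m$, and for $i \ge m$ the image of $a^{i!}$ in $A/U$ is trivial (here $a$ is a topological generator of $A$), so $A_i \le U$; intersecting over all open $U$ gives $\bigcap_i A_i \le \bigcap_U U = 1$.

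For the first assertion, fix an $A$-invariant open normal subgroup $N$, so that $\bar G = G/N$ is finite and $A$ acts on it. The kernel of the induced action $A \to \mathrm{Aut}(\bar G)$ is a closed subgroup of finite index, hence open, and therefore contains $A_i$ for all sufficiently large $i$ by the previous paragraph. For such an $i$ the subgroup $A_i$ centralizes $\bar G$, so $C_{\bar G}(A_i) = \bar G$. On the other hand, $A_i$ acts coprimely on $G$, so Lemma \ref{cc}(iii) gives $C_{G/N}(A_i) = N C_G(A_i)/N = N G_i/N$. Comparing the two expressions yields $N G_i/N = G/N$, that is, $G = N G_i$, as required.

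For the second assertion, suppose $G_j = G_{j+k}$ for every $k \ge 1$. Combined with the monotonicity of $(G_i)$ this means $G_i = G_j$ for all $i \ge j$ and $G_i \le G_j$ for all $i \le j$, so $G_i \le G_j$ for every $i$. Now apply the first assertion to an arbitrary $A$-invariant open normal subgroup $N$: it produces some index $i$ with $G = N G_i$, and since $G_i \le G_j$ we get $G = N G_i \le N G_j \le G$, whence $G = N G_j$. Thus $G_j$ surjects onto every quotient $G/N$ with $N$ an $A$-invariant open normal subgroup.

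To conclude $G_j = G$ I would invoke the standard fact that the $A$-invariant open normal subgroups of $G$ form a base of neighborhoods of the identity: given any open normal $M$, the stabilizer of $M$ under $A$ is open in $A$ (the action on subgroups of bounded index is continuous), so the $A$-orbit of $M$ is finite and its intersection is an $A$-invariant open normal subgroup contained in $M$. Since $G_j$ is closed, it equals the intersection of the sets $N G_j$ as $N$ runs over this cofinal family; but each such $N G_j$ equals $G$, forcing $G_j = G$. The main thing to get right is this last topological step — the reduction from ``$G_j$ meets every coset modulo every $A$-invariant $N$'' to ``$G_j = G$'' — which relies precisely on the cofinality of the $A$-invariant open normal subgroups together with the closedness of $G_j$.
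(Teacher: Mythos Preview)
Your proof is correct and follows essentially the same route as the paper: use Lemma~\ref{cc}(iii) to get $G=NG_i$ once $A_i$ acts trivially on $G/N$, then for the second part combine the ascending chain $G_1\le G_2\le\cdots$ with the first assertion to force $NG_j=G$ for every $A$-invariant open normal $N$, and conclude $G_j=G$. You spell out more explicitly than the paper does the facts that $\bigcap_i A_i=1$ and that the $A$-invariant open normal subgroups are cofinal; the paper simply asserts the existence of an $A$-invariant $N$ with $NG_j\neq G$ when $G_j\neq G$, taking these points for granted.
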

\begin{proof} Let $N$ be an $A$-invariant open normal subgroup of $G$. Obviously, $A$ induces a finite group of automorphisms of $G/N$. Thus, there exists $i$ such that $A_i$ acts on $G/N$ trivially. In view of Lemma \ref{cc} (iii) it follows that $G=NG_i$.

Now suppose that there is an index $j$ such that $G_j=G_{j+k}$ for each $k=1,2\dots$. Suppose that $G_j\neq G$. Then there exists an $A$-invariant open normal subgroup $N$ of $G$ such that $NG_j\neq G$. On the other hand, by the above, there is an index $i$ such that $G=NG_i$. Note that we have containments $$G_1\leq G_2\leq\dots\leq G_u\leq G_{u+1}\leq\dots.$$ Since by assumptions $G_j=G_{j+k}$ for each $k=1,2\dots$, it follows that $G_i\leq G_j$ and so $NG_j=G$, a contradiction.
\end{proof}

The next lemma uses the well-known corollary of the classification of finite simple groups that the order of any nonabelian finite simple group is divisible by 3 or 5.

\begin{lemma}\label{prost} Let $H$ be a profinite group in which for all odd primes $p\in\pi(G)$ the Sylow $p$-subgroups are finite. Then $H$ is virtually prosoluble.
\end{lemma}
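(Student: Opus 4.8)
The plan is to bound the nonabelian composition factors occurring in the finite continuous quotients of $H$, and then show that they can only appear "finitely deep". Since every odd Sylow subgroup of $H$ is finite, the $3$-part and the $5$-part of the order $|H|$ are finite; write $3^a$ and $5^b$ for them and set $C=a+b$. Now let $Q=H/N$ be an arbitrary finite quotient and fix a composition series of $Q$. By the quoted corollary of the classification, every nonabelian simple composition factor of $Q$ has order divisible by $3$ or by $5$. I would partition the nonabelian factors, counted with multiplicity, into those of order divisible by $3$ and those of order divisible by $5$ but not by $3$; each member of the first class contributes at least one factor $3$, and each member of the second class at least one factor $5$, to $|Q|$. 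As $|Q|$ divides $|H|$, the first class has at most $a$ members and the second at most $b$, so $Q$ has at most $C$ nonabelian composition factors.

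Next I would use that this count is monotone along the inverse system. If $M\le N$ are open normal subgroups of $H$, then $H/N$ is a quotient of $H/M$, so by Jordan--H\"older the nonabelian composition factors of $H/N$ form a sub-multiset of those of $H/M$; hence the number of nonabelian composition factors is non-decreasing as $M$ shrinks and never exceeds $C$. Consequently it attains a maximal value $c$, realised by some open normal $N_0$. For every open normal $M\le N_0$ the quotient $H/M$ then also has exactly $c$ nonabelian composition factors, and comparing a composition series of $H/M$ with one of its quotient $H/N_0$ shows that the kernel $N_0/M$ has no nonabelian composition factor, i.e.\ $N_0/M$ is soluble.

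Finally I would upgrade this to the prosolubility of $N_0$ itself. Given any open normal subgroup $V$ of $N_0$, its core $W=\bigcap_{h\in H}V^h$ is an open normal subgroup of $H$ contained in $N_0$, so $N_0/W$ is soluble by the previous step, whence its quotient $N_0/V$ is soluble as well. Thus every finite continuous quotient of $N_0$ is soluble, so $N_0$ is prosoluble; being of finite index in $H$, it witnesses that $H$ is virtually prosoluble. The main obstacle in this scheme is the very first reduction: without the classification-based divisibility fact there is no way to convert the arithmetic hypothesis on odd Sylow subgroups into a bound on nonabelian sections, and one must be careful to count composition factors \emph{with multiplicity}, since a single simple group may recur many times and each occurrence consumes part of the finite $3$- or $5$-part of $|H|$. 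The passage from "$N_0/M$ soluble for all $H$-normal $M$" to "$N_0$ prosoluble" is routine but genuinely needs the core trick above, because the normal subgroups of $N_0$ need not be normal in $H$.
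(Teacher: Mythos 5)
Your argument is correct, but it takes a noticeably longer route than the paper's. The paper simply observes that, since the Sylow $3$- and Sylow $5$-subgroups of $H$ are finite, $H$ has an open $\{3,5\}'$-subgroup $K$; every finite continuous quotient of $K$ then has order coprime to $15$, hence (by the same corollary of the classification) has no nonabelian composition factors and is soluble, so $K$ is prosoluble and $H$ is virtually prosoluble. You instead keep the whole group and bound the number of nonabelian composition factors of each finite quotient by $a+b$, use Jordan--H\"older monotonicity along the inverse system to find an open normal $N_0$ beyond which no new nonabelian factors appear, and finish with the core trick to convert solubility of $N_0/M$ for $H$-normal $M$ into prosolubility of $N_0$. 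All of these steps check out: counting with multiplicity is indeed essential, the maximum is attained because the counts are directed (intersect two open normal subgroups), and the core argument is genuinely needed since open normal subgroups of $N_0$ need not be normal in $H$. What your version buys is a slightly more quantitative conclusion (an explicit open normal prosoluble subgroup, reached after at most $a+b$ nonabelian factors are exhausted); what it costs is all the bookkeeping, which the paper's reduction to a $\{3,5\}'$-subgroup renders unnecessary, since in a group of order coprime to $15$ there are no nonabelian composition factors to count in the first place.
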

\begin{proof} Suppose that the lemma is false. Choose an open $\{3,5\}'$-subgroup $K\leq G$. We see that $K$ is prosoluble, a contradiction.
\end{proof}

Throughout the paper we write $O_\pi(G)$ for the maximal normal pro-$\pi$ subgroup of the group $G$. As usual, the additive group of $p$-adic integers is denoted by $\Bbb Z_p$. Recall that the group of automorphisms of $\Bbb Z_p$ is isomorphic to $\Bbb Z_p\oplus C_{p-1}$ if $p\geq3$ and $\Bbb Z_2\oplus C_2$ if $p=2$ (see for example \cite[Theorem 4.4.7]{rz}). Here $C_n$ stands for the cyclic group of order $n$. Consequently, the group of automorphisms of any procyclic group is abelian. 

\begin{lemma}\label{prfi} Let $G$ be a virtually procyclic profinite group. The second derived group $G''$ is finite.
\end{lemma}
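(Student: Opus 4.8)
The plan is to exploit the fact, recorded in the remark preceding the lemma, that the automorphism group of a procyclic group is abelian. First I would replace the given open procyclic subgroup by a well-behaved one: let $C$ be an open procyclic subgroup of $G$ and pass to its normal core $C_0=\bigcap_{g\in G}C^g$. Since $C$ has finite index in $G$, the core $C_0$ is an intersection of finitely many conjugates of $C$, hence an open normal subgroup; being a closed subgroup of the procyclic group $C$, it is itself procyclic. Thus we may assume from the outset that $C$ is an open normal procyclic subgroup of $G$.

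Conjugation then gives a continuous homomorphism $G\to\mathrm{Aut}(C)$ whose kernel is $Z:=C_G(C)$. Because $C$ is procyclic, $\mathrm{Aut}(C)$ is abelian, so the image of $G$ is abelian and therefore $G'\leq Z$. Observe that $C\leq Z$ with $C$ central in $Z$, and $[Z:C]\leq[G:C]<\infty$; in other words $Z$ has a central open procyclic subgroup of finite index.

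The crux is to deduce from this that $Z'=[Z,Z]$ is finite, a profinite incarnation of Schur's theorem; this is the one step that genuinely needs care. I would obtain it from the finite case together with a uniform bound. Write $n=[Z:C]$ and let $U$ range over the open normal subgroups of $Z$ contained in $C$, a cofinal family. Each finite quotient $Z/U$ then has a central subgroup of index dividing $n$, so by the finite form of Schur's theorem $|(Z/U)'|$ is bounded by a constant depending only on $n$. Since $(Z/U)'=Z'U/U\cong Z'/(Z'\cap U)$, and the subgroups $Z'\cap U$ are cofinal among the open normal subgroups of $Z'$, we get $Z'=\varprojlim Z'/(Z'\cap U)$ as an inverse limit of groups of uniformly bounded order; hence $Z'$ is finite.

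Finally, since $G'\leq Z$ we obtain $G''=[G',G']\leq[Z,Z]=Z'$, and the latter is finite, which proves the claim.
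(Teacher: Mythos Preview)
Your proof is correct and follows essentially the same route as the paper: pass to an open normal procyclic subgroup, use that the automorphism group of a procyclic group is abelian to force $G'$ into its centralizer, and then invoke Schur's theorem. The paper is slightly more direct in that it applies Schur to $G'$ itself (since $K\cap G'$ is central of finite index in $G'$) and simply cites the abstract form of Schur's theorem, which already applies to any group with centre of finite index---your inverse-limit derivation of the profinite case is valid but unnecessary.
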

\begin{proof} Let $K$ be an open normal procyclic subgroup of $G$. Observe that $G'\leq C_G(K)$. It follows that the centre of $G'$ has finite index, whence by Schur's theorem \cite[Theorem 10.1.4]{rob} $G''$ is finite. 
\end{proof}

\begin{lemma}\label{008} Let $G$ be a profinite group whose Sylow 2-subgroups are virtually procyclic. Then $G$ is virtually prosoluble.
\end{lemma}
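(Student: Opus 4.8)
The plan is to show that the finite continuous quotients of $G$ have a bounded number of nonabelian composition factors, and then to extract from this an open normal prosoluble subgroup. Fix a Sylow $2$-subgroup $P$ of $G$. Since $P$ is virtually procyclic it contains an open procyclic subgroup; replacing this subgroup by its normal core in $P$ (a closed subgroup of a procyclic group is again procyclic) we obtain a \emph{normal} procyclic subgroup $C_0\trianglelefteq P$ such that $D:=P/C_0$ is a finite $2$-group whose order $M=|D|$ depends only on $P$. I would first record two facts. (a) For every open normal subgroup $N$ of $G$ the image $\bar P$ of $P$ in $G/N$ is a Sylow $2$-subgroup of $G/N$, and the image $\bar C_0$ of $C_0$ is a cyclic normal subgroup of $\bar P$ with $\bar P/\bar C_0$ a quotient of $D$, hence of order at most $M$. (b) A Sylow $2$-subgroup of any nonabelian finite simple group has rank at least $2$: a finite $2$-group of $2$-rank $1$ is cyclic or generalized quaternion, while no nonabelian simple group has a cyclic Sylow $2$-subgroup (Burnside) nor a generalized quaternion one (Brauer--Suzuki).

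Next I would bound the nonabelian composition factors of $G/N$. Intersecting a composition series of $G/N$ with $\bar P$ yields a subnormal series of $\bar P$ whose successive factors are the Sylow $2$-subgroups of the composition factors of $G/N$; by (b), each nonabelian composition factor contributes a subnormal factor of $\bar P$ of rank at least $2$. It therefore suffices to bound the number of rank-$\ge 2$ factors that can occur in a subnormal series of $\bar P$. Projecting such a series onto $\bar P/\bar C_0$ and using that $\bar C_0$ is cyclic, one checks that the kernel of each factor's projection is cyclic, so a factor of rank at least $2$ forces a strict descent in the induced chain of $\bar P/\bar C_0$. Since $|\bar P/\bar C_0|\le M$, the number of such descents, and hence the number of nonabelian composition factors, is at most $B:=\log_2 M$, a bound independent of $N$.

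Finally I would run a stabilization argument. For an open normal subgroup $N$ let $\beta(N)$ denote the number of nonabelian composition factors of $G/N$. This function is monotone non-decreasing as $N$ shrinks and is bounded by $B$, so it attains its maximal value $b_0$ at some open normal $N_0$. For every open normal $N\le N_0$ the composition factors of $G/N$ are those of $G/N_0$ together with those of $N_0/N$, so the equality $\beta(N)=b_0$ forces $N_0/N$ to have no nonabelian composition factor, that is, to be soluble. As every open normal subgroup of $N_0$ contains an open normal subgroup of $G$ (its $G$-core), the group $N_0$ is prosoluble; being open in $G$, this shows that $G$ is virtually prosoluble.

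The main obstacle is the combinatorial core of the second step: extracting from the virtually procyclic structure of $P$ a bound on the number of rank-$\ge 2$ subnormal factors of its finite images that is uniform over all quotients. The only external input is the standard structural fact (b), that nonabelian finite simple groups have non-cyclic, non-quaternion Sylow $2$-subgroups; everything else is elementary profinite and finite group theory.
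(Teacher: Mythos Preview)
Your argument is correct, but it is considerably more elaborate than the paper's. The paper observes that once you have an open normal procyclic subgroup $C_0\trianglelefteq P$, you can simply choose an open normal subgroup $U\trianglelefteq G$ with $U\cap P\le C_0$; then $U\cap P$ is a Sylow $2$-subgroup of $U$ and is procyclic (being closed in $C_0$), so every finite continuous image of $U$ has cyclic Sylow $2$-subgroups and is therefore soluble. No counting of composition factors, no Brauer--Suzuki, and no stabilization step are needed: the single open subgroup $U$ already witnesses virtual prosolubility. (When $P$ is finite the paper passes instead to an open normal $2'$-subgroup and invokes Feit--Thompson.)

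Your route---bounding the number of nonabelian composition factors uniformly over all finite quotients via the fixed ``defect'' $|P/C_0|$, and then stabilizing---is a genuinely different and more robust template: it would apply to any hypothesis on $P$ that forces a uniform bound on the number of non-cyclic subnormal $2$-factors in its finite images. Two small remarks: first, your step~(b) only needs that the Sylow $2$-subgroup of a nonabelian simple group is \emph{not cyclic}, which follows from Burnside's transfer theorem alone; the appeal to Brauer--Suzuki (ruling out generalized quaternion) is superfluous for the projection argument, since a cyclic kernel already forces a strict descent whenever the factor is non-cyclic. Second, the projection step is exactly the modular-law computation $(Q_i\cap Q_{i+1}\bar C_0)/Q_{i+1}\cong (Q_i\cap\bar C_0)/(Q_{i+1}\cap\bar C_0)$, and it may be worth stating this explicitly rather than leaving it as ``one checks''.
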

\begin{proof} If the Sylow 2-subgroups of $G$ are finite, then $G$ has an open normal 2$'$-subgroup which is prosoluble by the Feit-Thompson theorem \cite{fetho}. We therefore assume that the Sylow 2-subgroups of $G$ are infinite. Let $P$ be a Sylow 2-subgroup and $K$ a  procyclic normal subgroup of $P$. Choose an open normal subgroup $U$ of $G$ such that $U\cap P\leq K$.  Then all Sylow 2-subgroups of $U$ are procyclic.  Note that finite groups with cyclic Sylow 2-subgroups are soluble \cite[Theorem 7.6.1]{go} and therefore $U$ is prosoluble.  The proof is complete.
\end{proof}

Recall that the Fitting height of a finite soluble group $G$ is the length $h(G)$ of a shortest series of normal subgroups all of whose quotients are nilpotent. By the Fitting height of a prosoluble group $G$ we mean the length $h(G)$ of a shortest series of normal subgroups all of whose quotients are pronilpotent. Note that in general a prosoluble group does not necessarily have such a series. The parameter $h(G)$ is finite if, and only if, $G$ is an inverse limit of finite soluble groups of bounded Fitting height. The next lemma follows from the corresponding result on finite soluble groups (see for example \cite{cajaspi}).
\begin{lemma}\label{1000} Let $p$ be a prime, $h$ a positive integer, and $G$ a prosoluble group having a Hall $p'$-subgroup $K$ such that $h(K)=h$. Then $h(G)$ is finite and bounded in terms of $h$ only.
\end{lemma}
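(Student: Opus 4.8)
The plan is to reduce the profinite statement to the finite result quoted from \cite{cajaspi} by passing to finite continuous quotients and invoking the characterization of finite Fitting height recalled just before the lemma. First I would fix, for each open normal subgroup $N$ of $G$, the finite soluble quotient $G/N$, and observe that the image $KN/N$ of the Hall $p'$-subgroup $K$ is a Hall $p'$-subgroup of $G/N$. Indeed $KN/N\cong K/(K\cap N)$ has order coprime to $p$, while $[G/N:KN/N]$ divides the pro-$p$ index $[G:K]$, so the image is genuinely a Hall $p'$-subgroup and not merely a $p'$-subgroup.

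Next I would bound the Fitting height of this image uniformly. Since $h(K)=h$, the group $K$ admits a closed normal series of length $h$ all of whose quotients are pronilpotent; projecting this series into $K/(K\cap N)$ produces a normal series of length at most $h$ with nilpotent quotients, because homomorphic images of pronilpotent groups are nilpotent and the Fitting height does not increase under quotients. Hence $h(KN/N)\leq h$ for every $N$. Applying the finite theorem of \cite{cajaspi} to $G/N$ and its Hall $p'$-subgroup $KN/N$ then yields a bound $h(G/N)\leq f(h)$, where $f$ depends only on $h$ and, crucially, neither on the prime $p$ nor on the chosen $N$.

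Finally I would assemble these uniform bounds. Letting $N$ range over the open normal subgroups of $G$, we exhibit $G=\varprojlim G/N$ as an inverse limit of finite soluble groups whose Fitting heights are all at most $f(h)$. By the characterization recalled immediately before the statement, it follows that $h(G)$ is finite and in fact $h(G)\leq f(h)$, which is bounded in terms of $h$ alone, as required.

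The main obstacle, and the only point carrying genuine content, is the uniformity of the finite bound $f(h)$ across all primes $p$; this is precisely the strength of the finite theorem cited from \cite{cajaspi}. Once that bound is in hand, the remaining steps are the standard inverse-limit reductions for prosoluble groups, and the sole routine verification is the one already noted, namely that the images $KN/N$ remain Hall $p'$-subgroups, which follows at once from the coprimality of order and index.
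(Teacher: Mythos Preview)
Your proposal is correct and is precisely the standard inverse-limit reduction the paper intends when it says the lemma ``follows from the corresponding result on finite soluble groups'' in \cite{cajaspi}; the paper gives no further details, and your expansion supplies exactly the routine verifications (that $KN/N$ is still a Hall $p'$-subgroup, that $h(KN/N)\le h$, and that the cited finite bound is uniform in $p$) needed to make that one-line remark rigorous.
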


\section{Auxiliary results}

In this section we develop the common part of the proofs of Theorem \ref{main1} and Theorem \ref{main2}. We start though with two remarks underlining the difference between the theorems -- the fact that abstract locally finite subgroups in a group satisfying the hypothesis of Theorem \ref{main1} are finite while those in Theorem \ref{main2} may be infinite.

\begin{remark}\label{777} Because of Theorem \ref{infab} in a profinite group with virtually torsion-free centralizers all abstract locally finite subgroups are finite. Thus, if $G$ is as in Theorem \ref{main1} then the abstract locally finite subgroups of $G$ are finite.
\end{remark}

\begin{remark}\label{888} If $G$ is as in Theorem \ref{main2}, that is, a profinite group in which the centralizer of every nontrivial element is virtually procyclic, then $G$ can have infinite locally finite subgroups. On the other hand, it is easy to see that for any prime $p$ the abstract locally finite $p$-subgroups of $G$ are finite.
\end{remark}

The next two lemmas provide key technical tools for the proofs of Theorem \ref{main1} and Theorem \ref{main2} respectively. Though the statements of the lemmas are different, the proofs are almost identical so we give a common proof for both.

\begin{lemma}\label{100} Let $G$ be as in Theorem \ref{main1}, that is, a profinite group in which the centralizer of every nontrivial element is virtually torsion-free while the centralizer of every element of infinite order is virtually procyclic. Suppose that $G=HA$, where $A\cong\Bbb Z_p$ and $H$ a normal pro-$p'$ subgroup. Then $G$ is virtually procyclic.
\end{lemma}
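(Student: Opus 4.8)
The plan is to exploit the coprime action of $A\cong\Bbb Z_p$ on the pro-$p'$ group $H$ and to reduce the whole statement to a single assertion: that some nontrivial element of $A$ centralizes all of $H$, i.e. that $C_A(H)$ is open in $A$. First I would set up the relevant centralizer chain. For $n\ge 0$ let $A_{(n)}=\overline{A^{p^n}}$ be the open subgroup of index $p^n$ in $A\cong\Bbb Z_p$, and put $D_n=C_H(A_{(n)})$; these are exactly the subgroups $C_H(A^{i!})$ occurring in Lemma \ref{resh} (after cofinal reindexing), and they form an ascending chain $D_0\le D_1\le\cdots$ with $\overline{\bigcup_n D_n}=H$ by the first part of Lemma \ref{resh}. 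If $a$ topologically generates $A_{(n)}$, then $a$ has infinite order and, since $A$ is abelian, $C_G(a)=C_H(a)A=D_nA$. By hypothesis $C_G(a)$ is virtually procyclic, so every $D_nA$, and hence every $D_n$, is virtually procyclic.

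The reduction is then as follows. Suppose we already know that $C_A(H)$ is open, and pick $1\ne a_1\in C_A(H)$. Then $a_1$ has infinite order and centralizes the open subgroup $H\,C_A(H)=H\times C_A(H)$ of $G$; hence $C_G(a_1)$ contains an open subgroup of $G$ and is virtually procyclic, so $G$ itself is virtually procyclic. Thus it suffices to prove that $C_A(H)$ is open, equivalently that $D_j=H$ for some $j$; and by Lemma \ref{resh}, applied to $A$ acting on $H$, it is enough to show that the chain $\{D_n\}$ stabilizes.

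To force stabilization I would argue by contradiction, assuming $C_A(H)=1$, so that $A$ acts faithfully and the $D_n$ ascend strictly. The strategy is to produce an element of infinite order whose centralizer is visibly not virtually procyclic. I would first cut down to a tractable structure: since a Sylow $2$-subgroup of $G$ is either $A\cong\Bbb Z_2$ (when $p=2$) or a Sylow $2$-subgroup of $H$, Lemmas \ref{008} and \ref{prost} should let me pass to the prosoluble case, after which Lemma \ref{1000} bounds the Fitting height and reduces the analysis to abelian $A$-sections and to the $A$-invariant Sylow subgroups $Q_q$ ($q\ne p$) furnished by Lemma \ref{cc}(iv). For a \emph{single} prime $q$ one checks that $C_A(Q_q)$ is automatically open: an $A$-invariant open copy of $\Bbb Z_q$ inside $Q_q$ is acted on through the finite $p$-part of $\operatorname{Aut}(\Bbb Z_q)$, and the kernel of that action embeds into $\operatorname{Hom}(Q_q/\Bbb Z_q,\Bbb Z_q)=0$, so $A$ acts on $Q_q$ through a finite group. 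Consequently faithfulness of $A$ on $H$ can only arise from infinitely many primes $q$ on which $A$ acts through $p$-power quotients of unbounded order. Across such primes I would build an infinite-order element $d$, supported on the corresponding $A$-invariant Sylow subgroups and fixed by a proper open $B\le A$; then $C_G(d)$ contains a semidirect product of an infinite product of copies of $\Bbb Z_q$ by $B$, on which $B$ acts with trivial centralizer, and a rank computation shows that such a group has no open procyclic subgroup, contradicting that $C_G(d)$ is virtually procyclic.

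The hard part will be this final step: organizing the cross-prime element $d$ and the non-procyclicity argument without the luxury of commuting Sylow subgroups, i.e. converting a faithful action spread over infinitely many primes into a genuine infinite-order element with an oversized centralizer. Making the reduction to pronilpotent (rather than merely prosoluble) sections precise, and ensuring at each stage that the elements produced have infinite order, so that the virtually procyclic hypothesis, and not merely virtual torsion-freeness, applies, are the two technical points I expect to consume most of the effort; here Remark \ref{777} and Zelmanov's Theorem \ref{torsion} guarantee that infinite $A$-invariant $q$-sections really do contain elements of infinite order.
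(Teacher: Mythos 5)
Your setup and your reduction are fine: the chain $D_n=C_H(A_{(n)})$ consists of virtually procyclic groups for exactly the reason you give, and the observation that it suffices to prove $C_A(H)\neq 1$ (equivalently open, since every nontrivial closed subgroup of $\Bbb Z_p$ is open) is correct and clean --- indeed any nontrivial $a_1\in C_A(H)$ has infinite order and $C_G(a_1)=G$. But the argument you propose for this reduced statement has a genuine gap at its core. Your single-prime step begins with ``an $A$-invariant open copy of $\Bbb Z_q$ inside $Q_q$'', i.e.\ it assumes the Sylow $q$-subgroups of $H$ are already infinite virtually procyclic groups. Nothing in the hypotheses gives you that; it is essentially the conclusion of the lemma in disguise. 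Without it, the claim that $C_A(Q_q)$ is automatically open for a single prime is false: $A\cong\Bbb Z_p$ can act faithfully on a pro-$q$ group of infinite rank (e.g.\ a Cartesian product $\prod_n V_n$ with $V_n$ a faithful irreducible $\Bbb F_q[C_{p^n}]$-module), and ruling such configurations out requires using the centralizer hypothesis, which you never do at this point. The supporting reductions are also unavailable: Lemmas \ref{prost} and \ref{008} say nothing about $H$ until you know its Sylow $2$-subgroups, and Lemma \ref{1000} needs a Hall $p'$-subgroup of known finite Fitting height --- but that Hall subgroup is $H$ itself, so the appeal is circular. (Note also that in the paper Lemma \ref{100} is an ingredient in the proof of prosolubility, Lemma \ref{prosolu}, so one cannot lean on prosolubility here.) Finally, the cross-prime construction of $d$ and the non-procyclicity of $C_G(d)$ --- which you yourself flag as the hard part --- is exactly where the content lies, and it is not carried out.

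The paper avoids all of this with one structural move you are missing. Each $D_n$ is virtually procyclic, so by Lemma \ref{prfi} its second derived subgroup $D_n''$ is finite; the ascending union $\bigcup_n D_n''$ is then an abstract locally finite subgroup, hence finite by Remark \ref{777}, and after passing to an open normal subgroup one may assume every $D_n$ is metabelian. Since $H=N D_n$ for every open normal $A$-invariant $N$ and suitable $n$ (Lemma \ref{cc}(iii)), $H$ itself is metabelian. Now $H'$ is abelian, and if infinite it lies in the centralizer of one of its elements of infinite order, hence is virtually procyclic; after two more harmless open reductions one finds a normal subgroup $L\cong\Bbb Z_q$ of $H$, notes that $\operatorname{Aut}(\Bbb Z_q)$ has no infinite pro-$q'$ subgroups and that $H=[H,A]C_H(A)$ with $[H,A]$ centralizing $L$, and concludes that $C_G(L)$ is open and virtually procyclic. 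You should incorporate the $D_n''$ argument: it is what converts the centralizer hypothesis into global structure for $H$ and makes the rest routine.
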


\begin{lemma}\label{1002} Let $G$ be as in Theorem \ref{main2}, that is, a profinite group in which the centralizer of every nontrivial element is virtually procyclic. Suppose that $G=HA$, where $H$ is a normal pro-$p$ subgroup and $A$ an infinite procyclic pro-$p'$ subgroup. Then $G$ is virtually procyclic.
\end{lemma}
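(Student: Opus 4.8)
My plan is to exploit the coprime action of $A$ on $H$ together with Lemma~\ref{resh}. Since $H$ is pro-$p$ and $A$ is pro-$p'$, the group $A$ acts coprimely on $H$, so Lemmas~\ref{cc} and~\ref{resh} are available. Set $A_i=A^{i!}$ and $H_i=C_H(A_i)$, so that $H_1\le H_2\le\cdots$. Each $A_i$ is infinite procyclic and hence contains an element $a_i$ of infinite order; since $H_i\le C_H(a_i)\le C_G(a_i)$ and $C_G(a_i)$ is virtually procyclic by hypothesis, every $H_i$ is virtually procyclic, i.e. virtually $\Bbb Z_p$. By the first part of Lemma~\ref{resh} the union $\bigcup_i H_i$ is dense in $H$.

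The strategy is then to prove that the chain $(H_i)$ stabilizes. Granting this, the second part of Lemma~\ref{resh} yields $H=H_j=C_H(A_j)$ for some $j$, which means that the infinite procyclic pro-$p'$ group $A_j$ centralizes the whole of $H$. As $H\cap A_j\le H\cap A=1$, the subgroup $HA_j$ is the direct product $H\times A_j$. Choosing an open subgroup $H_0\cong\Bbb Z_p$ of the virtually procyclic pro-$p$ group $H$, the group $H_0\times A_j$ is a direct product of two procyclic groups of coprime orders and is therefore procyclic; it is open in $H\times A_j$, which in turn has index at most $|A:A_j|$ in $G$. Hence $G$ is virtually procyclic, as required.

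The heart of the argument, and the step I expect to be the main obstacle, is the stabilization of $(H_i)$. Here I would first record that the centralizer hypothesis controls $H$ rather tightly: every abelian closed subgroup $B$ of $H$ lies in $C_G(b)$ for any $1\ne b\in B$, hence is virtually procyclic, so all abelian subgroups of $H$ have torsion-free rank at most $1$. This alone is not enough -- a nonabelian free pro-$p$ group already has all its abelian subgroups procyclic yet is far from being virtually procyclic -- so the action of $A$ must enter in an essential way through the dense family $(H_i)$. To bound things I would invoke Remark~\ref{888}: the abstract locally finite $p$-subgroups of $G$ are finite, so no infinite locally finite $p$-subgroup can be assembled inside $H$. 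Combined with Theorem~\ref{torsion}, this forces the torsion occurring in the virtually $\Bbb Z_p$ groups $H_i$ to stay bounded, so that the $H_i$ have bounded rank and $H$ itself has finite rank; a pro-$p$ group of finite rank satisfies the maximal condition on closed subgroups, whence the ascending chain $(H_i)$ must terminate. Making the passage from ``each $H_i$ is virtually procyclic with bounded torsion'' to ``$H$ has finite rank'' precise -- controlling how the rank-one pieces $H_i$ glue together across the closure without producing extra rank or infinite torsion -- is the delicate point on which the whole proof turns.
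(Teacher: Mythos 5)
Your setup (the chain $H_i=C_H(A_i)$, each $H_i$ virtually procyclic, $\bigcup_i H_i$ dense in $H$ via Lemma \ref{resh}) and your endgame (if the chain stabilizes then $H=H_j$ by Lemma \ref{resh}, $A_j$ centralizes $H$, $H$ is then virtually procyclic and $H_0\times A_j$ is an open procyclic subgroup) are both sound. One small slip: $A_i$ need not contain an element of infinite order, since an infinite procyclic group can be torsion (cf.\ Remark \ref{888}); but this is harmless here because in Theorem \ref{main2} the centralizer of \emph{every} nontrivial element is virtually procyclic, so any $1\neq a\in A_i$ does the job.

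The genuine gap is exactly the step you flag yourself: the passage from ``each $H_i$ is virtually $\Bbb Z_p$'' to ``the $H_i$ have bounded rank, hence $H$ has finite rank, hence the chain terminates.'' The inference ``the torsion in the $H_i$ stays bounded, so the ranks are bounded'' is not justified. To bound the rank of a virtually procyclic pro-$p$ group you must bound the index of a procyclic open subgroup, and that is not controlled by torsion alone; worse, for $p=2$ the torsion elements of a virtually $\Bbb Z_2$ pro-$2$ group need not even form a subgroup (e.g.\ $\Bbb Z_2\rtimes C_2$ with inversion), and even where they do, the finite pieces you want to feed into Remark \ref{888} must form an \emph{ascending} chain across the $H_i$, which you have not arranged. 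So the central claim of your proof is asserted rather than proved. The paper sidesteps this entirely: it uses Lemma \ref{prfi} to observe that each second derived subgroup $H_i''$ is finite, notes that $H_1''\leq H_2''\leq\cdots$ is an ascending chain of finite groups whose union is locally finite and hence finite by Remark \ref{888}, passes to an open subgroup where every $H_i$ is metabelian, and deduces from the density of $\bigcup_i H_i$ that $H$ itself is metabelian. It then finds a procyclic subgroup $L$ of $H'$ (or of $H$ in the abelian case) which, because pro-$p'$ automorphism groups of $\Bbb Z_p$ are finite, is centralized by a finite-index subgroup of $G$, and concludes since $C_G(L)$ is virtually procyclic. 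The paper never proves that the chain $(H_i)$ stabilizes; if you want to keep your strategy, you would need to supply a monotone family of finite subgroups in the spirit of the $H_i''$ trick, or otherwise prove directly that $H$ has finite rank, and as written you have not done so.
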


\begin{proof}[Proof of Lemma \ref{100} and Lemma \ref{1002}] Let $A_i=A^{i!}$ denote the subgroup generated by the $i!$th powers of elements of $A$. Set $H_i=C_H(A_i)$ for $i=1,2,\dots$. Each subgroup $H_i$ is virtually procyclic. Lemma \ref{prfi} shows that the second commutator subgroup ${H_i}''$ is finite. Therefore the abstract subgroup $H_0=\bigcup_i H_i''$ is locally finite. By virtue of Remark \ref{777} and Remark \ref{888}, $H_0$ is finite. Choose an open normal subgroup $M_1$ of $G$ such that $M_1\cap H_0=1$. It is sufficient to show that $M_1$ is virtually procyclic. We can work with the group $M_1A$ in place of $G$ and without loss of generality assume that $H_0=1$, that is, all centralizers $H_i$ are metabelian.

Suppose $N$ is an open normal $A$-invariant subgroup of $H$. Since $H/N$ is finite, there is an index $i$ such that $A_i$ acts trivially on $H/N$. Lemma \ref{cc} (iii) shows that $H=NH_i$. Recall that $H_i$ is metabelian. Hence $H/N$ is metabelian for any open normal $A$-invariant subgroup $N$ of $H$. We conclude that $H$ is metabelian. Due to the hypothesis on centralizers in $G$, it follows that $H'$ is an abelian virtually procyclic group. In the next paragraph we will deal with the case where $H'$ is finite. For now assume that $H'$ is infinite. Write $H'=K\times T$ where $T$ is a characteristic finite subgroup and $K$ a torsion-free procyclic subgroup. Choose an open normal subgroup $M_2$ of $G$ such that $M_2\cap T=1$. It is sufficient to show that $M_2$ is virtually procyclic. We can work with the group $M_2A$ in place of $G$ and without loss of generality assume that $T=1$, that is, $H'$ is torsion-free procyclic. Let $L$ be a subgroup of $H'$ isomorphic to $\Bbb Z_q$ for a prime $q$ (here $q\neq p$ in the case of Lemma \ref{100} and $q=p$ in the case of Lemma \ref{1002}). Since the group of automorphisms of $\Bbb Z_q$ does not contain infinite pro-$q'$ subgroups, a subgroup of finite index $B\leq A$ centralizes $L$. It is sufficient to show that $HB$ is virtually procyclic and so without loss of generality we assume that $A$ centralizes $L$. Then of course $[H,A]$ centralizes $L$. Recall that $H=[H,A]C_H(A)$. Note that $C_H(A)$ is virtually procyclic and so the index of the centralizer of $L$ in $C_H(A)$ is finite. Combining this with the facts that $[H,A]$ centralizes $L$ and $H=[H,A]C_H(A)$, we deduce that the index $[G:C_G(L)]$ is finite. Since $C_G(L)$ is virtually procyclic, $G$ is virtually procyclic, too. Thus, in the case where $H'$ is infinite the group $G$ is virtually procyclic.

We now deal with the case where $H'$ is finite. Passing to an open normal subgroup we may assume that $H$ is abelian. If $H$ is finite, the lemmas hold. So we assume that $H$ is infinite. It follows that $H$ is virtually procyclic and we can write $H$ as a direct product of a torsion-free procyclic subgroup and a finite subgroup. Choose an open normal subgroup $M_3$ of $G$ such that $M_3\cap H$ is torsion-free procyclic. As above, work with the group $M_3A$ in place of $G$ and without loss of generality assume that $H$ is torsion-free procyclic. Again, we consider a pro-$q$ subgroup $L$ of $H$ and see that its centralizer $C_G(L)$ has finite index in $G$ and is virtually procyclic. The proof is complete.
\end{proof} 

We can now reduce both Theorem \ref{main1} and Theorem \ref{main2} to questions on prosoluble groups.

\begin{lemma}\label{prosolu} Let $G$ be either as in Theorem \ref{main1} or as in Theorem \ref{main2}. Then $G$ is virtually prosoluble.
\end{lemma}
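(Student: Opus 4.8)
The plan is to argue by contradiction: assuming $G$ is not virtually prosoluble, I will produce inside $G$ an infinite abstract locally finite $p$-subgroup for some prime $p$, which is impossible by Remark \ref{777} in the situation of Theorem \ref{main1} and by Remark \ref{888} in the situation of Theorem \ref{main2}. The whole construction rests on the corollary of the classification already exploited in Lemma \ref{prost}, namely that every nonabelian finite simple group has an element of order $3$ or of order $5$. Hence it suffices to exhibit infinitely many commuting elements of a fixed prime order $p\in\{3,5\}$: the closed subgroup they generate is an infinite elementary abelian pro-$p$ group, and being an $\mathbb{F}_p$-vector space it is abstractly locally finite, which yields the required contradiction uniformly for both theorems.

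To locate such elements I would pass to $\bar G=G/R$, where $R$ is the prosoluble radical of $G$. Since $G$ is not virtually prosoluble, $\bar G$ is infinite with trivial prosoluble radical, so by the structure theory of profinite groups its socle is a closed normal subgroup equal to a Cartesian product $\prod_i T_i$ of infinitely many nonabelian finite simple groups. For each $i$ choose $t_i\in T_i$ of order $3$ or $5$, and pass to an infinite subfamily on which a single prime $p\in\{3,5\}$ occurs. Then $\bar t=(t_i)_i$ has order $p$ and its centralizer in $\prod_i T_i$ contains the infinite elementary abelian subgroup $\prod_i\langle t_i\rangle$; so at the level of the section $S/R\cong\prod_i T_i$ (where $S$ is the preimage of the socle and $R$ is prosoluble) the desired configuration is already present.

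The hard part will be lifting this configuration from the section $S/R$ back to honest elements of $S$, since neither torsion nor centralizers are preserved under the extension by the prosoluble kernel $R$; indeed an infinite elementary abelian \emph{section} alone would not contradict the remarks (witness $\prod\Bbb Z_p\twoheadrightarrow\prod C_p$), so one really must manufacture commuting torsion. I would handle this by working $p$-locally: take a $p$-element $t\in S$ lifting $\bar t$, let it act on a $t$-invariant Hall $p'$-subgroup of $R$, and use that $R$ is prosoluble so that Hall and Sylow subgroups and the coprime-action machinery of Lemma \ref{cc} are available; Lemma \ref{cc}(iii) then recovers, inside $S$, the large centralizer visible in the quotient, while the simplicity of the factors $T_i$ guarantees that genuine elements of order $p$ survive in each factor rather than merely elements of $p$-power order. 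Assembling these over the infinite family produces an infinite elementary abelian $p$-subgroup of $G$, contradicting Remark \ref{777} and Remark \ref{888}. Alternatively, and more cleanly, one may invoke the structural fact that a profinite group which is not virtually prosoluble contains a closed subgroup isomorphic to a Cartesian product of infinitely many nonabelian finite simple groups, after which the torsion elements $t_i$ are elements of $G$ outright and the lifting step is bypassed; pinning down this extraction is exactly the point at which the argument is most delicate.
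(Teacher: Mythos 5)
There is a genuine gap, and it occurs exactly where you flag it yourself: the extraction/lifting step is not just ``delicate'' but is the whole content of the lemma, and the structural claims you lean on are false for general profinite groups. First, $G/R$ need not have a socle at all: an infinitely iterated wreath product of copies of $A_5$ has trivial prosoluble radical but no minimal closed normal subgroup (the level kernels form a descending chain of nontrivial closed normal subgroups with trivial intersection), so ``its socle is a Cartesian product of infinitely many nonabelian finite simple groups'' is not available. Second, your ``cleaner alternative'' is simply not true: the group $\prod_p \mathrm{PSL}_2(\Bbb Z_p)$, the product taken over the primes $p$ for which $5$ is a quadratic non-residue, is not virtually prosoluble, yet it contains no nonabelian finite simple subgroup whatsoever -- any finite simple subgroup would project isomorphically into some factor, and the finite subgroups of $\mathrm{PGL}_2(\Bbb Q_p)$ are only cyclic, dihedral, $A_4$, $S_4$ and (when $\sqrt5\in\Bbb Q_p$) $A_5$. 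Third, even where the quotient configuration exists, an element of order $p$ in $G/R$ need not lift to a torsion element when $p$ divides $|R|$ (a generator of $\Bbb Z_p/p\Bbb Z_p$ lifts only to elements of infinite order), and commuting lifts are harder still; the coprime machinery of Lemma \ref{cc} does not repair this.

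The deeper problem is strategic: you try to reach a contradiction using \emph{only} Remarks \ref{777} and \ref{888}, i.e.\ to show that every non--virtually-prosoluble profinite group contains an infinite abstract locally finite subgroup. The paper does not prove (and does not need) any such general statement; its argument has a second branch in which the centralizer hypotheses enter essentially. Concretely: Feit--Thompson plus Lemma \ref{prost} give infinite Sylow $2$- and Sylow $q$-subgroups for some odd $q$; a descending chain of open normal subgroups $N_i$ together with the Frattini argument applied to $K_i=N_G(P\cap N_i)$ then forces a dichotomy. Either the Sylow $q$-subgroups of all the $K_i$ are torsion, in which case their union is an infinite locally finite subgroup and the Remarks give a contradiction (this is the only place your mechanism appears, and it arises from normalizers of a Sylow chain, not from simple composition factors); or some $K_j$ contains an infinite procyclic pro-$q$ subgroup $A$ normalizing the infinite pro-$2$ group $P_j$, and then Lemma \ref{100} (resp.\ Lemma \ref{1002}) -- which uses the hypotheses on centralizers in full -- shows $P_j$ is virtually procyclic, so Lemma \ref{008} yields virtual prosolubility. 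Your proposal omits this second branch entirely, and without it the proof cannot be completed.
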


\begin{proof} Assume that $G$ is not virtually prosoluble. By virtue of the Feit-Thompson theorem, the Sylow 2-subgroups of $G$ are infinite. Moreover, by Lemma \ref{prost}, for at least one odd prime $q\in\pi(G)$ the Sylow $q$-subgroups of $G$ are infinite. 

Let $P$ be a Sylow 2-subgroup and $Q$ a Sylow $q$-subgroup of $G$. Since the subgroups $P$ and $Q$ are infinite, we can choose an infinite chain of open normal subgroups $G=N_1>N_2>\dots$ such that $P\cap N_i>P\cap N_{i+1}$ and $Q\cap N_i>Q\cap N_{i+1}$ for each $i=1,2,\dots$. 

Set $P_i=P\cap N_i$ and $K_i=N_G(P_i)$. The Frattini argument shows that $G=N_iK_i$ for each $i=1,2,\dots$. Suppose first that the Sylow $q$-subgroups $Q_i$ of $K_i$ are torsion for each $i=1,2,\dots$. Obviously, the subgroups $Q_i$ can be chosen in such a way that $Q_1\leq Q_2\leq\dots$. Let $Q_0=\cup_iQ_i$ and $N=\cap_iN_i$. The choice of the chain $G=N_1>N_2>\dots$ guarantees that $Q\cap N$ has infinite index in $Q$. Therefore $Q_0$ is an infinite (abstract) locally finite subgroup of $G$. In view of Remark \ref{777} (or Remark \ref{888}) this is a contradiction. Hence,  there is an idex $j$ such that $Q_j$ is not torsion. Choose an infinite procyclic subgroup $A\leq Q_j$. Since $A$ normalizes $P_j$, Lemma \ref{100} shows that $P_j$ is virtually procyclic. Now Lemma \ref{008} says that the subgroup $N_j$ is virtually prosoluble. Of course, this implies that also $G$ is virtually prosoluble. This completes the proof.
\end{proof}

\section{Theorem \ref{main1}}

In this section we prove Theorem \ref{main1}. Thus, throughout this section $G$ is as in Theorem \ref{main1}, that is, a profinite group in which the centralizer of every nontrivial element is virtually torsion-free while the centralizer of every element of infinite order is virtually procyclic.

\begin{lemma}\label{101} Suppose that $G$ is prosoluble, written as a product $G=PH$, where $P$ is a pro-$p$ subgroup and $H$ is a virtually procyclic pro-$p'$ subgroup. Then $G$ is either virtually pro-$p$ or virtually procyclic.
\end{lemma}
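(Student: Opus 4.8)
The plan is to split the argument into cases according to whether the pro-$p$ factor $P$ and the pro-$p'$ factor $H$ are finite or infinite, the genuinely new situation being the one where both are infinite. Throughout I treat $P$ as the normal pro-$p$ factor of the decomposition, so that $G/P\cong H$ and every subgroup of the form $PB$ with $B\leq H$ is again a subgroup of $G$; note that such a subgroup inherits the hypotheses of Theorem \ref{main1}, since for a nontrivial $x\in PB$ the centralizer $C_{PB}(x)=C_G(x)\cap PB$ is a closed subgroup of a virtually torsion-free (respectively virtually procyclic) group. The easy cases are quickly disposed of. If $P$ is finite, choose an open normal subgroup $M$ of $G$ with $M\cap P=1$; then $M$ embeds in $G/P\cong H$, so $M$ is virtually procyclic and hence so is $G$. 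If instead $H$ is finite, then $P$ has finite index in $G$ and $G$ is virtually pro-$p$. So I assume henceforth that both $P$ and $H$ are infinite.

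The first substantial step is to force $P$ to be virtually $\Bbb Z_p$. Since $H$ is infinite and virtually procyclic, it contains a copy $A\cong\Bbb Z_\ell$ of the $\ell$-adic integers for some prime $\ell\neq p$. As $P$ is normal, $A$ normalizes $P$, and inside $PA$ the subgroup $P$ is a normal pro-$\ell'$ subgroup while $A\cong\Bbb Z_\ell$. Applying Lemma \ref{100} to $PA$, with the prime $\ell$ playing the role of $p$, shows that $PA$ is virtually procyclic. In particular its closed subgroup $P$ is virtually procyclic, and being an infinite pro-$p$ group it has an open subgroup isomorphic to $\Bbb Z_p$.

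Next I produce a $G$-invariant copy of $\Bbb Z_p$. Fix an open subgroup $U\leq P$ with $U\cong\Bbb Z_p$. Since $P$ is a finitely generated pro-$p$ group it has only finitely many open subgroups of index $[P:U]$, so the conjugates $\{U^g:g\in G\}$ form a finite set; hence $L:=\bigcap_{g\in G}U^g$ is an open subgroup of $U$, whence $L\cong\Bbb Z_p$, and $L$ is normal in $G$. Now conjugation gives an embedding $G/C_G(L)\hookrightarrow\mathrm{Aut}(L)$, and $\mathrm{Aut}(L)$ is abelian with finite pro-$p'$ part (recall the description of $\mathrm{Aut}(\Bbb Z_p)$ quoted before Lemma \ref{prfi}). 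The pro-$p'$ group $H$ therefore has finite image in $\mathrm{Aut}(L)$, while $P$ has finite image too because $L\leq C_P(L)$ forces $[P:C_P(L)]<\infty$. Since $\mathrm{Aut}(L)$ is abelian, the image of $G=PH$ is the product of these two finite subgroups and is finite, so $[G:C_G(L)]<\infty$. Finally, any nontrivial $x\in L$ has infinite order, so by hypothesis $C_G(x)$ is virtually procyclic; as $C_G(L)\leq C_G(x)$, the finite-index subgroup $C_G(L)$ is virtually procyclic, and therefore $G$ is virtually procyclic.

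I expect the main obstacle to be the combination of the first two steps rather than the final centralizer count. One must first invoke Lemma \ref{100} on the auxiliary subgroup $PA$ to collapse the a priori arbitrary pro-$p$ group $P$ down to a virtually $\Bbb Z_p$ group, for only then does the relevant automorphism group become abelian with finite pro-$p'$ part; and one must then upgrade an arbitrary open $U\cong\Bbb Z_p$ to a genuinely $G$-invariant $L$, which is exactly where the finiteness of the set of open subgroups of a fixed index in a finitely generated pro-$p$ group is essential. Once $L$ is normal and isomorphic to $\Bbb Z_p$, the coprimality of the $H$-action against the structure of $\mathrm{Aut}(\Bbb Z_p)$ makes the finite-index conclusion essentially automatic.
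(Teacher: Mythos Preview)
Your argument rests on treating $P$ as a \emph{normal} subgroup of $G$, but the lemma does not assume this: in the decomposition $G=PH$ the factor $P$ is only a (Sylow) pro-$p$ subgroup and $H$ a Hall $p'$-subgroup of the prosoluble group $G$, and neither need be normal. Every substantive step of your proof uses normality of $P$: forming the quotient $G/P$ in the ``$P$ finite'' case; asserting that $A$ normalizes $P$ so that Lemma~\ref{100} applies to $PA$; and, in the final step, arguing that the $G$-conjugates of $U$ all lie inside $P$ and hence are finite in number. None of this is available as stated, and there is no quick reduction to the normal case (for instance, $O_p(G)$ may well be trivial).

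This is exactly the difficulty the paper's proof is designed to circumvent. Instead of working with $P$ and $H$ directly, it invokes Lemma~\ref{1000} to produce a finite \emph{characteristic} series whose successive quotients are alternately pro-$p$ and pro-$p'$, and then argues by induction on the length of that series, handling separately the possibilities for the top factor $G/G_s$ and for the induction hypothesis on $G_s$. In each case one eventually reaches a characteristic (hence $G$-invariant) procyclic piece, and only then does the automorphism-group argument you sketch go through. Your endgame --- locating a normal copy of $\Bbb Z_p$ and using the structure of $\mathrm{Aut}(\Bbb Z_p)$ to force $C_G(L)$ to have finite index --- is precisely the mechanism the paper uses once the induction has produced such a characteristic subgroup; what is missing in your argument is the machinery that manufactures it. A minor secondary point: the claim that an infinite virtually procyclic pro-$p'$ group $H$ must contain a copy of $\Bbb Z_\ell$ is false in general (a procyclic group can be infinite torsion); here it is salvaged by Remark~\ref{777}, which you should cite.
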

\begin{proof} First, we observe that $H$ is a Hall $p'$-subgroup of $G$. By Lemma \ref{1000}  $G$ has a finite series $$1=G_0\leq G_1\leq G_2\leq \dots\leq G_{s+1}=G$$ of characteristic subgroups such that each quotient $G_{u+1}/G_u$ is either a pro-$p$ group or a pro-$p'$ group.  Note that $G_s=P_0H_0$, where $P_0=P\cap G_s$ and $H_0=H\cap G_s$ (see for example \cite[Lemma 2.4]{2015annali}). Arguing by induction on the length of the above series we can assume that $G_s$ is either virtually pro-$p$ or virtually procyclic. If at least one of the groups $G_s$, $G/G_s$ is finite, the result is immediate so we assume that both $G_s$ and $G/G_s$ are infinite. 

Suppose first that $G_s$ is virtually pro-$p$. Since the Hall $p'$-subgroups of $G_s$ are finite, we can replace $G$ by its open subgroup whose intersection with $G_s$ is a pro-$p$ group and the intersection with $H$ is torsion-free procyclic. Therefore without loss of generality assume that $G_s$ is a pro-$p$ group and $G/G_s$ is a torsion-free procyclic pro-$p'$ group. Choose a nontrivial pro-$q$ subgroup $L$ in $G$, for $q\neq p$, and look at the product $G_sL$. Lemma \ref{100} shows that $G_s$ is virtually procyclic. Let $K$ be an infinite characteristic procyclic subgroup of $G_s$. Since pro-$p'$ groups of automorphisms of $\Bbb Z_p$ are finite, it follows that $C_G(K)$ has finite index in $G$ and is virtually procyclic, as required.

Now we need to deal with the case where $G_s$ is virtually procyclic. Suppose that $G/G_s$ is a pro-$p'$ group. Then as above $G/G_s$ is virtually procyclic, too and $G=G_sH$. Replacing $H$ by a subgroup of finite index we can assume that $G/G_s$ is procyclic. Let again $K$ be a characteristic open procyclic subgroup of $G_s$. If $K$ is virtually pro-$p'$, then the whole group $G$ is virtually pro-$p'$ and so $G$ is virtually procyclic since $H$ is open in $G$. Otherwise, assume that $K$ contains an infinite pro-$p$ subgroup $K_0$. Since pro-$p'$ groups of automorphisms of $\Bbb Z_p$ are finite, it follows that $C_G(K_0)$ has finite index in $G$. Taking into account that $C_G(K_0)$ is virtually procyclic, the result follows.

It remains to handle the situation where $G/G_s$ is a pro-$p$ group, in which case $G=G_sP$. Again we look at a characteristic open procyclic subgroup $K$ of $G_s$. If $K$ is virtually pro-$p$, then the whole group $G$ is virtually pro-$p$ and we have nothing to prove. Assume that $K$ is not virtually pro-$p$ and so $K$ contains an infinite pro-$q$ subgroup $K_1$ for some prime $q\neq p$. As before, we see that $C_G(K_1)$ has finite index in $G$. Taking into account that $C_G(K_1)$ is virtually procyclic, the result follows.
\end{proof}

\begin{lemma}\label{098} If $G$ is pronilpotent, then $G$ is either pro-$p$ for some prime $p$ or virtually torsion-free procyclic.
\end{lemma}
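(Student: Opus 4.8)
The plan is to exploit the pronilpotent structure: $G$ is the direct product $G=\prod_{r\in\pi(G)}G_r$ of its Sylow subgroups $G_r$, and distinct factors commute elementwise. If $\pi(G)=\{p\}$ there is nothing to prove, so assume $|\pi(G)|\ge 2$. First I would dispose of the case in which every $G_r$ is finite: then the set of elements of finite support generates an abstract locally finite subgroup $\bigoplus_r G_r$, which by Remark \ref{777} must be finite; hence only finitely many $G_r$ are nontrivial, $G$ itself is finite, and a finite group is trivially virtually torsion-free procyclic. Thus I may fix a prime $p$ with $G_p$ infinite, together with a second prime $q$ and a nontrivial element $y\in G_q$.

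Next I would extract the key centralizer. Since the factors commute, $G_p\le C_G(y)$, and as $C_G(y)$ is virtually torsion-free so is $G_p$; being infinite it is not a torsion group (otherwise it would be locally finite by Theorem \ref{torsion}, hence finite by Remark \ref{777}), so $G_p$ contains an element $z$ of infinite order. Again using that distinct Sylow subgroups commute, $C_G(z)=W\times C_{G_p}(z)$, where $W:=\prod_{r\ne p}G_r$. Because $z$ has infinite order, $C_G(z)$ is virtually procyclic, and since $z\ne 1$ it is also virtually torsion-free; consequently its closed subgroup $W$ is both virtually procyclic and virtually torsion-free (a closed subgroup of a virtually procyclic, respectively virtually torsion-free, group inherits the property).

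The argument then splits according to $W$. If $W$ is finite, then $G_p$ is open in $G=G_p\times W$ and $G$ is virtually pro-$p$. If $W$ is infinite, the same torsion argument as above (Theorem \ref{torsion} and Remark \ref{777}) produces an element $w\in W$ of infinite order; since $G_p\le C_G(w)$ and $C_G(w)$ is virtually procyclic, $G_p$ is virtually procyclic, hence virtually $\Bbb Z_p$. Now both factors of $G=G_p\times W$ are virtually torsion-free procyclic, so intersecting an open $\Bbb Z_p\le G_p$ with an open torsion-free procyclic subgroup of $W$ yields an open subgroup of $G$ that is procyclic (its two factors have coprime prime supports) and torsion-free. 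Hence $G$ is virtually torsion-free procyclic, completing the dichotomy.

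I expect the main obstacle to be the bookkeeping around virtually procyclic groups rather than any single deep step: specifically, verifying that a closed subgroup of a virtually procyclic profinite group is again virtually procyclic, and that a direct product of two virtually (torsion-free) procyclic groups with coprime sets of prime divisors is virtually (torsion-free) procyclic. The conceptual heart is the ``bootstrapping'' in the case $W$ infinite, where the centralizer hypothesis applied to an infinite-order element of each factor forces the complementary factor to be virtually $\Bbb Z_\ell$; Remark \ref{777} is what rules out the pathological possibility of infinitely many finite Sylow subgroups, and Theorem \ref{torsion} is what guarantees the infinite-order elements used to trigger the procyclic bound.
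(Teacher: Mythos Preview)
Your argument mirrors the paper's: exploit the Sylow decomposition of a pronilpotent group and use the centralizer hypotheses applied to an element of one factor to bound the complementary factor. You are considerably more explicit than the paper's two-line sketch, spelling out the case where all Sylow subgroups are finite (via Remark~\ref{777}) and the bootstrapping step in which an infinite-order element of $W$ forces $G_p$ to be virtually procyclic.

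There is, however, one genuine discrepancy. When $W$ is finite you stop at ``$G$ is virtually pro-$p$'', but the lemma asks for $G$ to be pro-$p$ (not merely virtually) or virtually torsion-free procyclic. Since $W\ne1$ in this branch, you would still need $G_p$ itself to be virtually procyclic, and nothing in the hypotheses forces this: take $G=F\times C_q$ with $F$ a free pro-$p$ group of rank~$2$ and $q\ne p$. Centralizers of nontrivial elements of $F$ are isomorphic to $\Bbb Z_p$, so one checks that $G$ satisfies the standing hypotheses of this section, yet $G$ is pronilpotent, is not pro-$r$ for any prime $r$, and is not virtually procyclic. Thus the lemma as literally stated is false; the paper's own proof shares the gap, since its assertion that $O_{p'}(G)$ is virtually torsion-free procyclic for \emph{every} $p\in\pi(G)$ fails for $p=q$ in this example. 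What both you and the paper actually establish is the dichotomy ``virtually pro-$p$ or virtually torsion-free procyclic'', and that weaker statement is exactly what the subsequent application to $T_j$ in the proof of Theorem~\ref{main1} requires.
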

\begin{proof} Suppose that $G$ is not pro-$p$. Observe that for any $p\in\pi(G)$ the subgroup $O_{p'}(G)$ is virtually torsion-free procyclic. Therefore $G$ is a direct product of two virtually torsion-free procyclic subgroups of coprime orders, whence the lemma follows.
\end{proof} 

Recall that a Sylow basis in a group $L$ is a family of pairwise permutable Sylow $p_i$-subgroups $P_i$ of $L$, exactly one for each prime. The \emph{basis normalizer} of such Sylow basis in $L$ is the intersection of the normalizers of the $P_i$, that is, $\bigcap_i N_L(P_i)$. This subgroup is also known under the name of system normalizer. If $L$ is a profinite group and $T$ is a basis normalizer in $L$, then $T$ is pronilpotent and $L=\gamma_\infty(L)T$, where $\gamma_\infty(L)$ denotes the intersection of the terms of the lower central series of $L$. Furthermore, every prosoluble group $L$ possesses a Sylow basis and any two basis normalizers in $L$ are conjugate (see \cite[Prop.~2.3.9]{rz} and \cite[9.2]{rob}). 

We are now ready to complete the proof of Theorem \ref{main1}.
\begin{proof}[Proof of Theorem \ref{main1}] Recall that $G$ is a profinite group such that the centralizer of every nontrivial element is virtually torsion-free while the centralizer of every element of infinite order is virtually procyclic. We wish to prove that $G$ is either virtually pro-$p$ for some prime $p$ or virtually torsion-free procyclic. Because of Lemma \ref{prosolu} without loss of generality we can assume that $G$ is prosoluble. Choose a Sylow basis $P_1,P_2,\dots$ in $G$. Set $G=K_0$ and $K_{i+1}=\gamma_\infty(K_i)$ for $i=0,1,\dots$. Let $T_i$ denote the normalizer of the basis $P_1\cap K_i,P_2\cap K_i,\dots$ in $K_i$. Thus, the subgroups $T_i$ are pronilpotent and $T_i$ normalizes $T_j$ whenever $i\leq j$. Combining Remark \ref{777} with the property that $T_i$ normalizes $T_j$ whenever $i\leq j$ we derive that only finitely many of the subgroups $T_i$ are finite. Assume that the group $G$ is infinite and let $j$ be the minimal index such that $T_j$ is infinite. Since $T_j$ is infinite, Lemma \ref{098} guarantees that we can pick an infinite procyclic pro-$p$ subgroup  $A\leq T_j$. Recall that $T_j$ is a basis normalizer of $K_j$ and so $A$ normalizes a Sylow $q$-subgroup of $K_j$ for each $q\in\pi(T_j)$. Denote by $P$ a Sylow $p$-subgroup containing $A$ and by $H$ a Hall $p'$-subgroup normalized by $A$ so that $G=PH$. In view of Lemma \ref{100} $H$ is virtually procyclic. The theorem now follows from Lemma \ref{101}.
\end{proof}

\section{Theorem \ref{main2}} In this section we will prove Theorem \ref{main2}. Thus, throughout this section $G$ stands for a profinite group in which the centralizers of nontrivial elements are virtually procyclic.

\begin{lemma}\label{glll} If $G$ is prosoluble and $h(G)$ is finite, then $G$ is either virtually pro-$p$ for some prime $p$ or virtually procyclic.
\end{lemma}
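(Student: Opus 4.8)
The plan is to prove Lemma \ref{glll} by induction on the Fitting height $h(G)$, reducing the group at each stage to the situation handled by the previously established Lemma \ref{101}. Since $G$ is prosoluble with finite Fitting height, it possesses a series of characteristic subgroups whose successive quotients are pronilpotent. The base of the induction, $h(G)=1$, means $G$ is pronilpotent; here I would argue directly. A pronilpotent group is the direct product of its Sylow subgroups, and each Sylow subgroup is either pro-$p$ or, being a centralizer-like object, tightly constrained by the hypothesis that centralizers are virtually procyclic — in fact if $G$ involves two distinct primes one shows the relevant factors are virtually procyclic of coprime orders, forcing $G$ itself to be virtually procyclic unless $G$ is virtually pro-$p$.

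For the inductive step I would let $K=\gamma_\infty(G)$, the intersection of the lower central series, so that $h(K)=h(G)-1$ and by induction $K$ is either virtually pro-$p$ or virtually procyclic. By the remark preceding Theorem \ref{main1}, $G=KT$ where $T$ is a pronilpotent basis normalizer. Using the base-case analysis, $T$ is again virtually pro-$p$ or virtually procyclic, and one may extract from $T$ an infinite procyclic subgroup $A$ whenever $G$ is infinite. The aim is to realize $G$, up to passing to open subgroups, as a product $PH$ with $P$ a pro-$p$ subgroup and $H$ a virtually procyclic pro-$p'$ subgroup, so that Lemma \ref{101} applies. The decomposition of $K$ supplies one factor, and the action of the procyclic subgroup $A$ supplies the other: invoking Lemma \ref{1002} (the Theorem \ref{main2} analogue of Lemma \ref{100}) to the product of $A$ with a suitable Sylow or Hall subgroup shows that the complementary factor is virtually procyclic.

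Concretely, if the inductive factor $K$ is virtually procyclic, then after passing to an open subgroup $K$ is torsion-free procyclic, and I would pick inside $K$ an infinite pro-$q$ subgroup $L\cong\mathbb{Z}_q$. Since pro-$q'$ groups of automorphisms of $\mathbb{Z}_q$ are finite, an open subgroup of $G$ centralizes $L$; as $C_G(L)$ is virtually procyclic and of finite index, $G$ is virtually procyclic. If instead $K$ is virtually pro-$p$, one replaces $G$ by an open subgroup whose intersection with $K$ is a genuine pro-$p$ group, writes the resulting group as $PH$ as above, and applies Lemma \ref{1002} to deduce that the Hall $p'$-part is virtually procyclic; Lemma \ref{101} then yields the conclusion. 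Throughout, whenever both a factor and the corresponding quotient are infinite, the key leverage is that an infinite procyclic subgroup $A$ coming from a pronilpotent piece acts coprimely, so Lemma \ref{cc} and the structure of $\mathrm{Aut}(\mathbb{Z}_q)$ pin down large centralizers.

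The main obstacle I anticipate is the bookkeeping of which prime plays the role of $p$ in the appeal to Lemma \ref{101}: the lemma requires a clean splitting $G=PH$ into a pro-$p$ part and a virtually procyclic pro-$p'$ part, whereas the inductive data give only that $K$ and $T$ are each \emph{either} virtually pro-$p$ \emph{or} virtually procyclic, with the distinguished prime possibly differing between the two. Reconciling these — and in particular handling the case where $K$ is virtually procyclic but divisible by several primes, so that no single $p$ organizes the whole group — will require care, and is where passing to appropriate open subgroups and using the finiteness of $\mathrm{Aut}(\mathbb{Z}_q)/(\text{pro-}q\text{ part})$ to absorb the residual primes does the real work. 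Once $G$ is forced into the shape demanded by Lemma \ref{101}, the conclusion is immediate.
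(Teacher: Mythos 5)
Your overall skeleton (induction on $h(G)$, base case pronilpotent, inductive step via $G=KT$ with $K=\gamma_\infty(G)$ and $T$ a basis normalizer) matches the paper's, but the plan to finish by ``reducing to Lemma \ref{101}'' is a genuine misstep. Lemma \ref{101} is proved in Section 4 under the standing hypothesis of Theorem \ref{main1}: its proof leans on Lemma \ref{100} and, crucially, on the fact that in that setting every virtually procyclic subgroup is virtually \emph{torsion-free} procyclic (Remark \ref{777}), so one can always pass to a torsion-free procyclic open piece. Under the hypothesis of Theorem \ref{main2} this fails: as Remark \ref{888} points out, a procyclic group such as $\prod_p C_p$ need not be virtually torsion-free, and only the locally finite \emph{$p$-subgroups} of $G$ are guaranteed finite. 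So your step ``after passing to an open subgroup $K$ is torsion-free procyclic'' is unjustified, and Lemma \ref{101} simply is not available here. A second, related slip: from $L\cong\Bbb Z_q\leq K$ you conclude that an open subgroup of $G$ centralizes $L$ because pro-$q'$ automorphism groups of $\Bbb Z_q$ are finite; but $G$ may act on $L$ through an infinite pro-$q$ image, so this only works when the acting group is known to be pro-$q'$ (which is exactly how the paper uses it).

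The paper's proof instead stays entirely inside Section 5: after choosing an open normal $L$ with $L_1=L\cap K$ and $L_2=L\cap T$ each pro-$p$ or procyclic, it runs a five-way case analysis on the types of $L_1$ and $L_2$, using only Lemma \ref{1002}, Theorem \ref{torsion}, Remark \ref{888}, and the structure of $\mathrm{Aut}(\Bbb Z_p)$. The case you flag as the ``main obstacle'' --- both factors procyclic with $L_1$ divisible by several primes --- is indeed the hardest one, and the paper resolves it not by absorbing residual primes into open subgroups but by a contradiction argument: picking $A_1\cong\Bbb Z_p$ and $A_2\cong\Bbb Z_q$ inside $L_1$ and showing that if both centralizers had infinite index then $C_G(A_1)$ would contain $A_2C_{B_2}(A_1)$, which is not virtually procyclic. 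Your proposal does not contain this idea, and without it (or a substitute) the argument does not close.
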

\begin{proof} Use induction on $h=h(G)$. Suppose first that $h=1$, that is, $G$ is pronilpotent. Assume that $G$ is not pro-$p$. Observe that for any $p\in\pi(G)$ the subgroup $O_{p'}(G)$ is virtually procyclic. Therefore $G$ is a direct product of two virtually procyclic subgroups of coprime orders, whence the lemma follows.

We now suppose that $h\geq2$. Let $T$ be a basis normalizer in $G$ and $K=\gamma_\infty(G)$. Thus, $G=KT$. Since $T$ is pronilpotent and $h(K)=h-1$, by induction each of the subgroups $K$ and $T$ is either virtually pro-$p$ for some prime $p$ or virtually procyclic. We can choose an open normal subgroup $L$ in $G$ such that each of the subgroups subgroups $L_1=L\cap K$ and $L_2=L\cap T$ is either pro-$p$ for some prime $p$ or procyclic.

If both $L_1$ and $L_2$ are pro-$p$ for the same prime $p$, then $G$ is virtually pro-$p$ and we are done.

Suppose there are two different primes $p$ and $q$ such that $L_1$ is pro-$p$ while $L_2$ is pro-$q$. If $L_2$ has an infinite torsion-free procyclic subgroup, then by Lemma \ref{1002} $L_1$ is procyclic. In that case $C_G(L_1)$ has finite index in $G$ (because pro-$p'$ groups of automorphisms of a procyclic pro-$p$ group are finite). Since $C_G(L_1)$ is virtually procyclic, we are done. Otherwise, $L_2$ is torsion and so by Theorem \ref{torsion} locally finite. In view of Remark \ref{888} $L_2$ is finite and hence $G$ is virtually pro-$p$.

Now assume that $L_1$ is pro-$p$ while $L_2$ is procyclic. Let $B$ be the maximal pro-$p'$ subgroup of $L_2$. If $B$ is finite, then $G$ is virtually pro-$p$. We therefore assume that $B$ is infinite. Lemma \ref{1002} tells us that $L_1B$ is virtually procyclic. In particular, $C_B(L_1)$ has finite index in $B$. Observe that $C_B(L_1)$ commutes with both $L_1$ and $L_2$. Therefore $L\leq C_G(B)$. Note that $C_G(B)$ is virtually procyclic, whence $G$ is virtually procyclic, as required.

Suppose $L_1$ is procyclic while $L_2$ is pro-$p$. If $L_1$ has a nontrivial pro-$q$ subgroup for a prime $q\neq p$, we argue as above and deduce that $G$ is virtually procyclic because pro-$q'$ groups of automorphisms of a procyclic pro-$q$ group are finite. If $L_1$ is pro-$p$, the group $G$ is virtually pro-$p$.

It remains to consider the case where both $L_1$ and $L_2$ are procyclic while $L_1$ is not pro-$p$ (not even virtually pro-$p$). If $L_1$ has a finite subgroup $M$, we deduce that $G$ is virtually procyclic since $C_G(M)$ has finite index in $G$. Thus, assume that $L_1$ is torsion-free. Choose two subgroups $A_1,A_2\leq L_1$ where $A_1\cong\Bbb Z_p$ and $A_2\cong\Bbb Z_q$ for different primes $p$ and $q$. We can assume that both $C_G(A_1)$ and $C_G(A_2)$ have infinite index in $G$. Let $B_1$ and $B_2$ be the Sylow $p$- and $q$-subgroups of $L_2$, respectively. It follows that both $B_1$ and $B_2$ are infinite (since $C_G(A_1)$ and $C_G(A_2)$ have infinite index). Moreover, $B_1\cap C_G(A_1)=1$ and $B_2\cap C_G(A_2)=1$. Furthermore, $C_{B_2}(A_1)$ is infinite. Thus, $C_G(A_1)$ contains the subgroup $A_2C_{B_2}(A_1)$, which is not virtually procyclic. This contradiction completes the proof.
\end{proof}

We will require the celebrated theorem of Thompson \cite{tho} that says that if a finite soluble group $K$ admits a coprime automorphism $\phi$ of prime order, then $h(K)$ is bounded in terms of $h(C_K(\phi))$ only (see \cite{turull} for a survey on results of similar nature).

If $K$ is a locally finite group, we write $h(K)\leq h$ to mean that $K$ has a normal series of length at most $h$ all of whose factors are locally nilpotent. It is well-known that if $K$ is a locally finite-soluble group, then $h(K)\leq h$ if and only if $h(U)\leq h$ for every finite subgroup $U\leq K$.

We will now complete the proof of Theorem \ref{main2}. 
\begin{proof}[Proof of Theorem \ref{main2}] Assume that $G$ is infinite. Further, taking into account Lemma \ref{prosolu} without loss of generality we assume that $G$ is prosoluble. In view of Lemma \ref{glll} it is sufficient to show that $h(G)$ is finite. Choose a Sylow basis $P_1,P_2,\dots$ in $G$. Set $G=K_0$ and $K_{i+1}=\gamma_\infty(K_i)$ for $i=0,1,\dots$. Let $T_i$ denote the normalizer of the basis $P_1\cap K_i,P_2\cap K_i,\dots$ in $K_i$. Thus, the subgroups $T_i$ are pronilpotent and $T_i$ normalizes $T_j$ whenever $i\leq j$. Suppose that a Sylow $p$-subgroup of some $T_j$ is non-torsion. Then we can pick an infinite procyclic pro-$p$ subgroup  $A\leq T_j$. Recall that $T_j$ is a basis normalizer of $K_j$ and so $A$ normalizes a Hall $p'$-subgroup $H$ of $K_j$. In view of Lemma \ref{1002} $H$ is virtually procyclic. Therefore Lemma \ref{1000} shows that $h(G)$ is finite, as required.

Hence, we only need to deal with the case where all Sylow subgroups of the $T_i$ are torsion. In particular, because of Theorem \ref{torsion}, the Sylow subgroups of the $T_i$ are locally finite. Remark \ref{888} now implies that the Sylow subgroups of the $T_i$ are finite. Let $D_i$ denote the abstract subgroup generated by all Sylow subgroups of $T_i$. Observe that $D_i$ is locally finite and $D_i$ normalizes $D_j$ whenever $i\leq j$. Therefore the subgroup $D=\langle D_i;\ i=1,2,\dots\rangle$ is a locally finite-soluble group with finite Sylow subgroups. Choose an element $x\in D$ of prime order $p$. Since $D$ is residually finite and the Sylow $p$-subgroup of $D$ is finite, there is a normal $p'$-subgroup $N$ of finite index in $D$. By hypotheses $C_N(x)$ has a locally cyclic subgroup of finite index. Thus, there is $h$ such that $C_N(x)\leq h$. The Thompson Theorem \cite{tho} now tells us that there is $h_0$ such that $h(N)\leq h_0$. Since $N$ has finite index in $D$, we conclude that there is $h_1$ such that $h(D)\leq h_1$. Obviously, this implies that $D_{h_1}=1$ and we immediately obtain that $T_{h_1}=1$. Therefore $h(G)\leq h_1$, as requred. The proof is complete.
\end{proof}

We will now deduce Corollary \ref{corr}. Thus, assume that $G$ is a profinite group in which the centralizer of every nontrivial element is virtually procyclic and suppose that $G$ is not a pro-$p$ group. We wish to show that $G$ has finite rank. Obviously, this is correct if $G$ is virtually procyclic so, by virtue of Theorem \ref{main2}, we may assume that $G$ is virtually pro-$p$ for some prime $p$. We also assume that $G$ is infinite. Let $P$ be the largest normal pro-$p$ subgroup in $G$ and $a\in G$ an element of prime order $q\neq p$. Such an element $a$ exists since $G$ is not a pro-$p$ group. We now require the following theorem, due to Khukhro \cite{khukhro}.

\begin{theorem}\label{khu}
Let $K$ be a finite nilpotent group with an automorphism $\alpha$ of prime order $q$ such that $C_K(\alpha)$ has rank $r$. Then $K$ contains a characteristic subgroup $N$ such that $N$ has $q$-bounded nilpotency class and $K/N$ has $(q,r)$-bounded rank.
\end{theorem}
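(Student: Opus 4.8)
The plan is to reduce the statement to a question about graded Lie rings and to settle that question by the Kreknin--Higman machinery in its ``almost fixed-point-free'' form. First I would reduce to the case where $K$ is a finite $p$-group. Since $K$ is nilpotent it is the direct product of its Sylow subgroups $K_{p}$, each of which is characteristic and hence $\alpha$-invariant, and $C_{K}(\alpha)=\prod_{p}C_{K_{p}}(\alpha)$, so each $C_{K_{p}}(\alpha)$ has rank at most $r$. If the theorem holds for every $K_{p}$, producing characteristic subgroups $N_{p}\le K_{p}$ of $q$-bounded class with $K_{p}/N_{p}$ of $(q,r)$-bounded rank, then $N=\prod_{p}N_{p}$ works for $K$, its class being the maximum of the classes of the $N_{p}$ and the rank of $K/N$ the maximum of the ranks of the $K_{p}/N_{p}$. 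Thus I may assume $K$ is a finite $p$-group, and I distinguish the coprime case $p\neq q$ from the modular case $p=q$.

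Next I would pass to the associated graded Lie ring $L=\bigoplus_{i\ge 1}\gamma_{i}(K)/\gamma_{i+1}(K)$, a Lie ring over $\mathbb{F}_{p}$ on which $\alpha$ acts as an automorphism of order dividing $q$ and whose nilpotency class equals that of $K$. The key point is that the fixed-point subring $C_{L}(\alpha)=\bigoplus_{i}C_{\gamma_{i}/\gamma_{i+1}}(\alpha)$ is homogeneous with every homogeneous component of rank at most $r$: in the coprime case $p\neq q$ this is immediate from Lemma \ref{cc}(iii), which gives $C_{\gamma_{i}/\gamma_{i+1}}(\alpha)=C_{\gamma_{i}}(\alpha)\gamma_{i+1}/\gamma_{i+1}$, a quotient of a subgroup of $C_{K}(\alpha)$, while in the modular case $p=q$ the same bound must be established by a separate argument since the coprime machinery is unavailable. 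In this way $\alpha$ becomes almost fixed-point-free on $L$ in the graded sense, and the target reduces to the Lie-ring statement that $L$ has a homogeneous (hence characteristic) ideal $I$ of $q$-bounded class with $L/I$ of $(q,r)$-bounded rank.

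To prove this Lie-ring statement I would invoke the Kreknin--Higman theory. In the coprime case I extend scalars to a ring containing a primitive $q$-th root of unity $\omega$ and decompose $L=\bigoplus_{j=0}^{q-1}L_{j}$ into eigencomponents of $\alpha$, where $L_{0}=C_{L}(\alpha)$ has homogeneous components of rank at most $r$ while $L_{1},\dots,L_{q-1}$ are the ``free'' components. The Kreknin--Higman identities express sufficiently long brackets of elements from the $L_{j}$ with $j\neq 0$ as elements of a subring of $q$-bounded class; the almost fixed-point-free combinatorial method of Khukhro and Makarenko then converts the rank bound $r$ on $L_{0}$ into an ideal $I$ of $q$-bounded class whose quotient has $(q,r)$-bounded rank. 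In the modular case $p=q$ the same strategy applies with $\alpha$ replaced by the nilpotent operator $\alpha-1$, which satisfies $(\alpha-1)^{q}=0$ over $\mathbb{F}_{p}$; here the eigenspace decomposition is replaced by the filtration by powers of $\alpha-1$, and an analogous but more delicate combinatorial argument yields the ideal $I$ together with the required rank bound on the fixed points.

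Finally I would translate the Lie-ring conclusion back to $K$. A homogeneous ideal $I$ of $L$ of bounded class $c$ corresponds to a characteristic subgroup $N\le K$ with $\gamma_{c+1}(N)=1$ for the same $q$-bounded $c$, and the $(q,r)$-bounded rank of $L/I$ forces $K/N$ to have $(q,r)$-bounded rank; in the modular case, where the Lazard correspondence is unavailable once the class exceeds $p$, this passage must be effected through the theory of powerful $p$-groups. I expect the main obstacle to be precisely this interface: simultaneously controlling the nilpotency class and the rank through the graded-Lie-ring combinatorics and then faithfully recovering a \emph{characteristic} subgroup of the group itself with the same bounds, the difficulty being sharpest when $p=q$, where the absence of a clean group/Lie-ring dictionary forces the heavier powerful-$p$-group arguments.
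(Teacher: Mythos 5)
The first thing to say is that the paper does not prove this statement at all: Theorem \ref{khu} is quoted verbatim from Khukhro \cite{khukhro}, immediately followed by a routine inverse-limit remark, so there is no internal proof to compare against. Your sketch must therefore be measured against Khukhro's published argument, whose broad architecture (Sylow reduction, passage to an associated Lie ring, Kreknin--Higman theory applied to the eigenspace decomposition, the graded-centralizer machinery of Khukhro and Makarenko, and powerful $p$-group techniques in the non-coprime case) your outline does correctly echo. Your first paragraph, the reduction to a finite $p$-group, is complete and correct.

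As a proof, however, the proposal has genuine gaps, concentrated exactly where you flag uncertainty. First, the associated graded Lie ring of the lower central series is a Lie ring over $\Bbb Z$, not over the field with $p$ elements: the components $\gamma_i(K)/\gamma_{i+1}(K)$ are abelian $p$-groups of possibly large exponent, so in the modular case the identity $(\alpha-1)^q=0$ is not available; repairing this by switching to the Jennings--Zassenhaus filtration destroys the equality between the class of the Lie algebra and the class of $K$, which your back-translation needs. Second, in the case $p=q$ the bound on the rank of the fixed points of $\alpha$ on each graded component is merely asserted to require ``a separate argument'': without coprimality, fixed points do not lift through quotients (Lemma \ref{cc}(iii) fails), and this non-coprime fixed-point control is one of the hard points of Khukhro's paper, not a detail. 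Third, and most seriously, the final interface does not exist as stated: a homogeneous ideal $I$ of the graded Lie ring of bounded class does not ``correspond'' to a characteristic subgroup $N\le K$ with $\gamma_{c+1}(N)=1$ --- there is no map from homogeneous ideals back to subgroups of $K$, and even after producing a subgroup with the right class and co-rank bounds one must replace it by a characteristic subgroup, which in the actual proof requires the Khukhro--Makarenko theorem on large characteristic subgroups satisfying multilinear commutator identities, in its rank version. Finally, invoking ``the almost fixed-point-free combinatorial method of Khukhro and Makarenko'' for the key Lie-ring step is essentially a citation of the substance of the theorem being proved rather than an argument for it. In sum: a sound roadmap of the known proof, but with its three hardest steps left open.
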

Using the standard inverse limit argument we deduce that if $K$ is a pronilpotent group admitting a coprime automorphism $\alpha$ of prime order $q$ such that $C_K(\alpha)$ has rank $r$, then $K$ contains a characteristic subgroup $N$ such that $N$ is nilpotent (with $q$-bounded nilpotency class) and $K/N$ has $(q,r)$-bounded rank.

Coming back to our group $G$, observe that $a$ induces an automorphism of $P$ of order dividing $q$. Since $C_P(a)$ is virtually procyclic, it follows that $C_P(a)$ has finite rank and therefore $P$ has a characteristic subgroup $N$ such that $N$ is nilpotent and $P/N$ has $(q,r)$-bounded rank. Since $N$ is nilpotent, it is contained in the centralizer of any nontrivial element of its centre. The centralizer is virtually procyclic and so both $N$ and $P/N$ have finite rank. We conclude that $P$ (and therefore $G$) has finite rank, as required.

\end{document}